 \DeclareFontFamily{U}{wncy}{}
    \DeclareFontShape{U}{wncy}{m}{n}{<->wncyr10}{}
    \DeclareSymbolFont{mcy}{U}{wncy}{m}{n}
    \DeclareMathSymbol{\Sh}{\mathord}{mcy}{"58}
\definecolor{Green}{rgb}{0.0, 0.5, 0.0}
\theoremstyle{plain}
\newtheorem{theorem}{Theorem}[section]
\newtheorem{lemma}[theorem]{Lemma}
\newtheorem{proposition}[theorem]{Proposition}
\newtheorem{corollary}[theorem]{Corollary}
\numberwithin{equation}{section}
\theoremstyle{definition}
\newtheorem{definition}[theorem]{Definition}
\theoremstyle{remark}
\newtheorem{remark}[theorem]{Remark}
\newcommand{\Tr}{\operatorname{Tr}}
\newcommand{\Gal}{\operatorname{Gal}}
\newcommand{\Qp}{\mathbb{Q}_p}
\newcommand{\Zp}{\mathbb{Z}_p}
\newcommand{\ZZ}{\mathbb{Z}}
\newcommand{\HH}{\mathbb{H}}
\newcommand{\cH}{\mathcal{H}}
\newcommand{\Z}{\mathbb{Z}}
\newcommand{\cG}{\mathcal{G}}
\newcommand{\cE}{\mathcal{E}}
\newcommand{\fE}{\mathfrak{E}}
\newcommand{\p}{\mathfrak{p}}
\newcommand{\Q}{\mathbb{Q}}
\newcommand{\F}{\mathbb{F}}
\newcommand{\Fp}{\F_p}
\newcommand{\fM}{\mathfrak{M}}
\newcommand{\cO}{\mathcal{O}}
\newcommand{\image}{\mathrm{im}}
\newcommand{\Hom}{\mathrm{Hom}}
\newcommand{\Sel}{\mathrm{Sel}}
\newcommand{\rank}{\mathrm{rank}}
\newcommand{\N}{\mathbb{N}}
\newcommand{\TT}{\mathbb{T}}
\newcommand{\tor}{\mathrm{tor}}
\newcommand{\res}{\mathrm{res}}
\newcommand{\cor}{\mathrm{cores}}
\newcommand{\tr}{\mathrm{tr}}
\newcommand{\longmono}{\mbox{\;$\lhook\joinrel\longrightarrow$\;}}
  \DeclareFontFamily{U}{wncy}{}
  \DeclareFontShape{U}{wncy}{m}{n}{<->wncyr10}{}
  \DeclareSymbolFont{mcy}{U}{wncy}{m}{n}
  \DeclareMathSymbol{\sha}{\mathord}{mcy}{"58}
\begin{document}

\title[$\Lambda$-submodules of finite index of plus and minus Selmer groups]{$\Lambda$-submodules of finite index of anticyclotomic\\ plus and minus Selmer groups of elliptic curves}

\author{Jeffrey Hatley, Antonio Lei and Stefano Vigni}

\address[Hatley]{
Department of Mathematics\\
Union College\\
Bailey Hall 202\\
Schenectady, NY 12308\\
USA}
\email{hatleyj@union.edu}

\address[Lei]{D\'epartement de math\'ematiques et de statistique\\
Pavillon Alexandre-Vachon\\
Universit\'e Laval\\
Qu\'ebec, QC, Canada G1V 0A6}
\email{antonio.lei@mat.ulaval.ca}

\address[Vigni]{Dipartimento di Matematica\\
Universit\`a di Genova\\
Via Dodecaneso 35\\
16146 Genova \\
Italy}
\email{vigni@dima.unige.it}

\begin{abstract}
Let $p$ be an odd prime and $K$ an imaginary quadratic field where $p$ splits. Under appropriate hypotheses, Bertolini showed that the Selmer group of a $p$-ordinary elliptic curve over the anticyclotomic $\Z_p$-extension of $K$ does not admit any proper $\Lambda$-submodule of finite index, where $\Lambda$ is a suitable Iwasawa algebra. We generalize this result to the plus and minus Selmer groups (in the sense of Kobayashi) of $p$-supersingular elliptic curves. In particular, in our setting the plus/minus Selmer groups have $\Lambda$-corank one, so they are not $\Lambda$-cotorsion. As an application of our main theorem, we prove results in the vein of Greenberg--Vatsal on Iwasawa invariants of $p$-congruent elliptic curves, extending to the supersingular case results for $p$-ordinary elliptic curves due to Hatley--Lei.
\end{abstract}

\thanks{The second named author's research is supported by the NSERC Discovery Grants Program RGPIN-2020-04259 and RGPAS-2020-00096.}
\thanks{The third named author's research is partially supported by PRIN 2017 ``Geometric, algebraic and analytic methods in arithmetic''.}

\subjclass[2010]{11R23 (primary); 11G05, 11R20 (secondary).}
\keywords{Elliptic curves, anticyclotomic extensions, Selmer groups, supersingular primes.}

\maketitle

\section{Introduction}\label{section:intro}

When studying Selmer groups in the context of Iwasawa theory, it is often desirable to show that these Selmer groups have no proper $\Lambda$-submodules of finite index or, equivalently, that their Pontryagin duals have no nontrivial finite $\Lambda$-submodules, where $\Lambda$ is an appropriate $p$-adic Iwasawa algebra for a prime number $p$ (to fix ideas, in this introduction we can take $\Lambda$ to be the $\Z_p$-algebra $\Z_p[\![X]\!]$ of formal power series over the $p$-adic integers $\Z_p$). For instance, a finitely generated $\Lambda$-module $M$ (which, in our context, will always be the Pontryagin dual of a suitable Selmer group) admits a map 
\begin{equation*}
M \longrightarrow \Lambda^{\oplus r}\oplus \bigoplus_{i=1}^s \Lambda/ (p^{a_i})\oplus \bigoplus_{j=1}^t\Lambda\big/\bigl(F_j^{n_j}\bigr)
\end{equation*}
with finite kernel and cokernel, for integers $r,s,t\geq0$, $a_i,n_j\geq1$ and irreducible Weierstrass polynomials $F_j\in\Z_p[X]$. If $M$ has no nontrivial finite $\Lambda$-submodules, then this map is in fact injective. 

Such non-existence theorems have played a crucial role in certain strategies for proving cases of the Iwasawa Main Conjecture, as in \cite{epw}, \cite{gv}, \cite{kim}, and by now they have been studied in great generality (see, e.g., \cite{greenberg-structure}), although many specific cases of interest remain open to inquiry.

In \cite{Ber1} and \cite{Ber2}, Bertolini approached this question for the Selmer groups attached to rational elliptic curves with good ordinary reduction at a prime $p$ along  the anticylotomic $\Z_p$-extension of a suitable imaginary quadratic field $K$, which is the unique $\Z_p$-extension of $K$ that is Galois and non-abelian over $\Q$. 

The splitting behavior in $K$ of $p$ and of the prime factors of the conductor $N$ of $E$ exerts a tremendous influence on the structure of the Selmer groups that one can attach to $E$ and $K_\infty$. In particular, when $N$ is divisible by an odd number of primes that are inert in $K$ the situation is remarkably similar to the cyclotomic (over $\Q$ or over $K$) setting. When the number of such primes is even, however, the sign of the functional equation for the relevant $L$-function is $-1$, which forces the Selmer groups to grow and thus become more complicated (see, e.g., \cite{Ber1} and \cite{LV-BUMI}).

Another situation in which the Iwawawa theory for elliptic curves is more delicate is when $E$ has good supersingular reduction at $p$. In this case, even in the cyclotomic setting over $\Q$ the usual Selmer groups fail to satisfy a control theorem as in \cite{Maz1}, so the Selmer group over the full cyclotomic extension is unable to provide bounds on Selmer coranks at finite layers of the cyclotomic tower. Thus, for supersingular primes, it becomes necessary to consider special restricted plus/minus Selmer groups that turn out to be more amenable to classical arguments. This general program was first proposed and carried out over $\Q$ by Kobayashi in \cite{kobayashi} and has admitted since then many generalizations such as \cite{IP} and \cite{kim09}. It is worth remarking that the $p$-adic analytic counterpart of Kobayashi's theory was provided by Pollack in \cite{pollack-duke}.

In this paper we take up the task of generalizing Bertolini's results in \cite{Ber2} to the case of elliptic curves with good supersingular reduction at $p$. Thus, we consider the complications that arise both from having supersingular reduction and from working over the anticyclotomic $\Z_p$-extension of an imaginary quadratic field satisfying the Heegner hypothesis with respect to the conductor of our elliptic curve.

\subsection{Setup and notation} \label{section:notation}

Throughout this article, $p$ will denote an odd prime number. Let $E/\Q$ be an elliptic curve of conductor $N$ with good supersingular reduction at $p$ and $a_p(E)=0$, where $a_p(E)$ is the $p$-th Fourier coefficient of the weight $2$ newform attached to $E$ by modularity. 

Let $K$ be an imaginary quadratic field such that all the primes dividing $pN$ split in $K$ (in particular, $K$ satisfies the \emph{Heegner hypothesis} relative to $N$), write $\cO_K$ for the ring of integers of $K$ and let 
\[
p\cO_K=\p\p^c
\]
be the factorization of $p$ as a product of (distinct) maximal ideals of $\cO_K$ (here $c$ denotes the nontrivial element of $\Gal(K/\Q)$). Fix algebraic closures $\bar\Q$ and $\bar\Q_p$ of $\Q$ and $\Q_p$, respectively, write $\mathbb C_p$ for the completion of $\bar\Q_p$, choose an embedding $\iota_p:\bar\Q\hookrightarrow\mathbb{C}_p$ and suppose that $\p$ is the prime above $p$ that lands inside the valuation ideal of $\mathbb{C}_p$. Let $T$ denote the $p$-adic Tate module of $E$, set $V:=T\otimes_{\Zp}\Qp$ and $A:=V/T=E[p^\infty]$. For every integer $m\ge 1$, define $A_m:=T/p^m T=E[p^m]$. Moreover, let $G_\Q:=\mathrm{Gal}(\bar{\Q}/\Q)$ be the absolute Galois group of $\Q$.

Let $K_\infty$ be the anticyclotomic $\Z_p$-extension of $K$, whose $n$-th finite layer will be denoted by $K_n$, and let $\Lambda$ be the associated Iwasawa algebra. We assume that the two primes of $K$ above $p$ are totally ramified in $K_\infty$; this is a natural condition to require when working in the supersingular setting (cf., e.g., \cite[Assumptions 1.7, (2)]{DI}, \cite[Hypothesis (S)]{IP}, \cite[Theorem 1.2, (2)]{PolWes}) and holds if $p$ does not divide the class number of $K$. We shall denote the unique primes (of the finite layers) of $K_\infty$ above $\p$ and $\p^c$ by the same symbols. For more details, see Section \ref{sec2}.

Finally, for any compact or discrete $\Lambda$-module $M$ we write $M^\vee:=\Hom_{\Z_p}^\mathrm{cont}(M,\Q_p/\Z_p)$ for its Pontryagin dual, which may be equipped with the compact-open topology (here $\Hom_{\Z_p}^\mathrm{cont}(\bullet,\star)$ denotes continuous homomorphisms of $\Z_p$-modules and $\Q_p/\Z_p$ is equipped with the quotient, i.e., discrete, topology).

\subsection{Important hypotheses}\label{sec:hypotheses} In this section, we collect some of the hypotheses that we will impose at various points throughout the paper. We assign labels to these hypotheses for easy recall. Some of the notation appearing in these hypotheses has not been introduced yet; we indicate where this notation is defined, as well as the first time each hypotheses is invoked. 

\begin{itemize}
\item[\textbf{(Heeg)}] Every prime number dividing $Np$ splits in the imaginary quadratic field $K$, where the integer $N$ is understood from context to be the conductor of some elliptic curve over $\Q$ with good supersingular reduction at $p$.

\begin{itemize} \item This setup already appeared in $\S$\ref{section:notation}, and it is assumed throughout the entirety of this paper (for various values of $N$).\end{itemize}

\item[\textbf{(Tam)}] Given an elliptic curve $E/\Q$, the prime $p$ does not divide $ \#\bigl(E / E^0\bigr)$.

\begin{itemize}\item This condition ensures that none of the local Tamagawa factors of $E$ are divisible by $p$. This assumption is necessary in order to obtain an isomorphism in Proposition \ref{prop:control} rather than an injection with bounded cokernel; see, e.g., \cite[Lemma 3.3]{greenberg-cetraro} and the discussion that immediately follows it.
\end{itemize}

\item[\textbf{(Norm)}] For an elliptic curve $E/\Q$ satisfying \textbf{(Tam)}, the  local norm maps $E(K_{n,v})\rightarrow E(K_v)$ are surjective for all $n\ge1$ and all primes of $K$ that do not divide $p$.

\begin{itemize} \item This assumption is introduced in $\S$\ref{sec:admissable} in order to generalize some cohomological lemmas from \cite{BD2}, where the surjectivity of the local norm maps at primes above $p$ is also required (see \cite[Assumption 2.15]{BD2}). However, this additional assumption guarantees that $E$ is $p$-ordinary, which is not within the scope of the present paper.
\end{itemize}

\item[\textbf{(mod $p$)}] For an elliptic curve $E/\Q$ satisfying both $a_p(E)=0$ and $\textbf{(Norm})$, we have $\fE^\pm_n\ne0$ and $\sha^\pm_p(E,K_{n+1}/K_n)=0$  for all $n \in \N$. 
\begin{itemize}\item The notation $\fE^\pm_n$ (which conceals the fact that it depends on $E$) is defined in $\S$\ref{sec:plus-minus-heegner-points}, while $\sha^\pm_p(E,K_{n+1}/K_n)$ is defined in $\S$\ref{sec:computation-of-universal-norms}. This hypothesis first appears in the statement of Theorem \ref{thm:main-no-finite-index}.\end{itemize}

\item[\textbf{(Cong)}] Given two elliptic curves $E_1/\Q$ and $E_2/\Q$, both of which satisfy \textbf{(mod $p$)}, there is an isomorphism $E_1[p] \simeq E_2[p]$ as $G_\Q$-modules. 
\begin{itemize} 
\item For pairs of elliptic curves satisfying this assumption, we will be able to compare the Iwasawa invariants of their plus/minus Selmer groups in $\S$\ref{sec:application-variation}.
\end{itemize}

\end{itemize}

We emphasize that assumption \textbf{(Heeg)} is always in effect and the other assumptions have a cumulative nature: 
\[ 
\textbf{(Cong)} \Rightarrow \textbf{(mod $p$)} \Rightarrow \textbf{(Norm)} \Rightarrow \textbf{(Tam)}.
\]

\subsection{Main results}

For every $n\in\N$ we define plus and minus Mordell--Weil groups $E^\pm(K_n)$, which provide the local conditions in terms of which we introduce our plus and minus Selmer groups $\Sel^\pm_{p^\infty}(E/K_\infty)$ \emph{\`a la} Kobayashi. It is worth remarking that our construction is inspired by Kim (\cite{kim}); in particular, we define plus/minus Selmer groups over the finite layers of $K_\infty/K$ somewhat differently from most of the literature (see, e.g., \cite{ciperiani}, \cite{DI}, \cite{IP}, \cite{LV-BUMI}). This construction has the advantage of yielding a nicer control theorem and also some important duality properties that may be of independent interest.

Our main result, which corresponds to Theorem \ref{thm:main-no-finite-index}, can be stated as follows.

\begin{theorem} \label{thm11}
If \textbf{(Heeg)} and \textbf{(mod $p$)} hold, then the $\Lambda$-module $\Sel_{p^\infty}^\pm(E/K_\infty)$ has no proper $\Lambda$-submodule of finite index.
\end{theorem}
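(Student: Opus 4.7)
My plan is to adapt the strategy pioneered by Bertolini in \cite{Ber2} to the supersingular setting by systematically replacing the ordinary Selmer condition at $p$ with Kobayashi's plus/minus condition. Setting $X:=\Sel^\pm_{p^\infty}(E/K_\infty)^\vee$, the theorem is equivalent, by Pontryagin duality, to the statement that $X$ contains no nontrivial finite $\Lambda$-submodule, and this is what I would actually prove.

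First I would invoke the control theorem (Proposition \ref{prop:control}), which, since \textbf{(mod $p$)} forces \textbf{(Tam)}, provides isomorphisms $\Sel^\pm_{p^\infty}(E/K_\infty)^{\Gamma_n}\simeq\Sel^\pm_{p^\infty}(E/K_n)$ for all $n$, where $\Gamma_n:=\Gal(K_\infty/K_n)$. Dualizing yields $X_{\Gamma_n}\simeq\Sel^\pm_{p^\infty}(E/K_n)^\vee$. Assuming for contradiction that $X$ has a nontrivial finite $\Lambda$-submodule $F$, and choosing $n$ large enough that $\Gamma_n$ acts trivially on $F$, a standard descent argument translates the existence of $F$ into the existence of a compatible family of proper $\Lambda$-submodules of finite index inside $\Sel^\pm_{p^\infty}(E/K_n)$ that do not come from elements killed by climbing the tower.

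To contradict this, I would use the plus/minus Heegner points $\fE^\pm_n\in E^\pm(K_n)$ of $\S$\ref{sec:plus-minus-heegner-points}. Their Kummer images lie in $\Sel^\pm_{p^\infty}(E/K_n)$, and hypothesis \textbf{(mod $p$)} supplies precisely the two properties needed: the nonvanishing $\fE^\pm_n\ne0$ (so the corresponding classes are nontrivial modulo $p$) and $\sha^\pm_p(E,K_{n+1}/K_n)=0$ (so these classes are norm-compatible as $n$ varies, up to the correct targets). Combined with a Poitou--Tate duality sequence at each level, in which the only non-strict local contributions are those above $p$ and are governed precisely by the plus/minus condition, the Heegner Kummer classes fill out enough of the Selmer group modulo $p$ to rule out any compatible system of proper finite-index submodules, yielding the desired contradiction along the lines of \cite{Ber2}.

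The main obstacle is the cohomological input at primes above $p$. In the ordinary case, Bertolini relies (following \cite{BD2}) on the surjectivity of local norm maps $E(K_{n,\p})\to E(K_\p)$ at the prime above $p$, which fails in the supersingular setting. The plus/minus formalism is precisely what must be inserted here: one must show that the corestriction behavior of the plus/minus local cohomology at a prime above $p$ is controlled exactly by $\sha^\pm_p(E,K_{n+1}/K_n)$, so that the vanishing of this obstruction asserted by \textbf{(mod $p$)} takes the place of the ordinary norm surjectivity. Establishing this technical input---essentially a supersingular analogue of \cite[Assumption 2.15]{BD2}---is the crux of the argument; once it is in place, the global portion of the proof parallels \cite{Ber2} with only formal modifications.
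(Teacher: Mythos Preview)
Your sketch misses the actual architecture of Bertolini's argument and therefore of the paper's. The paper does \emph{not} argue by contradiction via a descent of a putative finite submodule $F$; instead it reduces the question to a statement about universal norms. Concretely, one constructs a perfect pairing (Theorem~\ref{perfect-thm})
\[
S^\pm_p(E/K)\big/US^\pm_p(E/K)\times \Sel^\pm_{p^\infty}(E/K_\infty)_{G_\infty}\longrightarrow G_\infty\otimes_{\Z_p}\Q_p/\Z_p,
\]
from which Corollary~\ref{cor:key} deduces that $\Sel^\pm_{p^\infty}(E/K_\infty)$ has no proper finite-index $\Lambda$-submodule if and only if the $\Z_p$-module $S^\pm_p(E/K)/US^\pm_p(E/K)$ is torsion-free. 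One then shows $US^\pm_p(E/K)\simeq\Z_p$ by producing, from the Heegner module, a free rank-one $\Z_p[G_n]$-submodule $\tilde U_n^\pm\subset T\HH_n^\pm$ whose corestriction gives an element of $US^\pm_p(E/K)$ not divisible by $p$ (Lemmas~\ref{lem:free}, \ref{lem:Un}, \ref{lem:noSha} and the proof of Theorem~\ref{thm:main-no-finite-index}). None of this---the pairing, the compact Selmer group $S^\pm_p(E/K)$, or the universal norm submodule---appears in your outline.

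You have also misdiagnosed the role of the two hypotheses in \textbf{(mod $p$)}. The failure of local norm surjectivity at primes above $p$ is \emph{not} repaired by the vanishing of $\sha^\pm_p(E,K_{n+1}/K_n)$; it is repaired intrinsically by Kobayashi's formalism, since $(\HH_v^\pm)^\vee$ is free of rank one over $\Lambda$ (Proposition~\ref{prop:kim}(a)), which is exactly what makes the isomorphism \eqref{eq:res-iso} go through at $v\mid p$ in the proof of Proposition~\ref{prop:tatepairing}. The vanishing of $\sha^\pm_p(E,K_{n+1}/K_n)$ is a \emph{global} condition and enters only later (Lemma~\ref{lem:Un}): it guarantees that the free $R_n$-module $U_n^\pm$ built from the Heegner module in Lemma~\ref{lem:free} actually lies in the Mordell--Weil part $\HH_n^\pm[p]$, so that it admits an integral lift to $T\HH_n^\pm$ whose corestriction is a genuine universal norm. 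Without this, the Heegner classes need not produce elements of $US^\pm_p(E/K)$ at all.
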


As a sample application of this result, we are able to prove a theorem regarding the variation of Iwasawa invariants among \emph{$p$-congruent} rational elliptic curves, that is, rational elliptic curves with Galois-isomorphic $p$-torsion subgroups. 

\begin{theorem} Assume \textbf{(Heeg)} holds and let $E_1/\Q$ and $E_2/\Q$ be elliptic curves satisfying \textbf{(Cong)}. Then
\[
\mu\bigl(\Sel^\pm_{p^\infty}(E_1/K_{\infty})^\vee\bigr) = 0\;\Longleftrightarrow\; \mu\bigl(\Sel^\pm_{p^\infty}(E_2/K_{\infty})^\vee\bigr)=0.
\]
Furthermore, if these $\mu$-invariants vanish, then the $\lambda$-invariants of the Pontryagin duals of these Selmer groups are related by an explicit formula.
\end{theorem}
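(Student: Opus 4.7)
The proof follows the Greenberg--Vatsal paradigm, as adapted by Hatley--Lei to the anticyclotomic ordinary case and now transposed to the supersingular setting. The plan is to compare the two Selmer groups through \emph{residual} plus/minus Selmer groups $\Sel^\pm(E_i[p]/K_\infty) \subseteq H^1(K_\infty, E_i[p])$, defined by imposing mod-$p$ analogues of the local conditions used in the construction of $\Sel_{p^\infty}^\pm(E_i/K_\infty)$. The first step is to verify that the plus/minus local condition at the primes $\p, \p^c$ above $p$ descends properly: using the Kummer sequence $0 \to E_i[p] \to E_i[p^\infty] \xrightarrow{p} E_i[p^\infty] \to 0$, one must check that the plus/minus subspace of $H^1(K_{\infty,\p}, E_i[p^\infty])$ restricts to a subspace of $H^1(K_{\infty,\p}, E_i[p])$ depending only on the Galois module $E_i[p]$. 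Hypothesis \textbf{(mod $p$)}---which is implied by \textbf{(Cong)}---is precisely what makes this descent clean.

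Second, the snake lemma applied to the diagram comparing $\Sel_{p^\infty}^\pm(E_i/K_\infty)[p]$ with $\Sel^\pm(E_i[p]/K_\infty)$ produces an exact sequence whose error terms are controlled by $E_i(K_\infty)[p]$ and by local Euler-factor contributions at primes of $K$ dividing $N_i$ but not $p$. Theorem \ref{thm11}, applicable under \textbf{(mod $p$)}, ensures that $\Sel_{p^\infty}^\pm(E_i/K_\infty)^\vee$ has no nontrivial finite $\Lambda$-submodule; together with the $\Lambda$-corank-one structure of this Selmer group, the structure theorem then yields the key equivalence
\[
\mu\bigl(\Sel_{p^\infty}^\pm(E_i/K_\infty)^\vee\bigr)=0 \;\Longleftrightarrow\; \Sel^\pm(E_i[p]/K_\infty)^\vee \text{ is a finitely generated } \Lambda/p\text{-module of rank one}.
\]

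Third, the Galois isomorphism $E_1[p] \simeq E_2[p]$ furnished by \textbf{(Cong)} induces an identification $H^1(K_\infty, E_1[p]) \simeq H^1(K_\infty, E_2[p])$, and I would compare the local conditions prime by prime. At the primes above $p$ and at primes not dividing $N_1 N_2 p$ the local conditions match; at primes dividing $N_1 N_2$ but not $p$ they may differ by modules of finite $\Lambda/p$-length. Consequently $\Sel^\pm(E_1[p]/K_\infty)^\vee$ and $\Sel^\pm(E_2[p]/K_\infty)^\vee$ are pseudo-isomorphic as $\Lambda/p$-modules up to explicit finite Euler-factor defects, yielding both the equivalence of $\mu = 0$ and, in that case, an explicit formula of the shape
\[
\lambda\bigl(\Sel_{p^\infty}^\pm(E_1/K_\infty)^\vee\bigr) - \lambda\bigl(\Sel_{p^\infty}^\pm(E_2/K_\infty)^\vee\bigr) = \sum_\ell \bigl(\delta_\ell(E_1) - \delta_\ell(E_2)\bigr),
\]
where $\ell$ ranges over primes of bad reduction for $E_1$ or $E_2$ and $\delta_\ell$ records the local defect at $\ell$.

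The main obstacle is the first step: establishing precise control between the plus/minus local subspace at $\p$ in $H^1(K_{\infty,\p}, E_i[p^\infty])$ and its residual analogue in $H^1(K_{\infty,\p}, E_i[p])$. In the cyclotomic setting over $\QQ$ this traces back to Kobayashi and Kim; in the present anticyclotomic supersingular context one must transport their framework while accounting for the two primes $\p, \p^c$ above $p$ and the global Heegner-style structure underpinning the paper.
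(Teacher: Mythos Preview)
Your broad strategy---compare residual Selmer groups, invoke the no-finite-submodule theorem, read off Iwasawa invariants---is the paper's as well, but the execution differs in two places, and the second is a genuine gap.

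First, the paper obtains a \emph{direct} isomorphism $\Sel^\pm_p(E_1/K_\infty)\simeq\Sel^\pm_p(E_2/K_\infty)$, with no non-primitive Selmer groups and no local defects. Under \textbf{(Tam)} the local condition at every prime $v\nmid p$ (good or bad) is the unramified subspace of $H^1(K_{\infty,v},E_i[p])$, hence depends only on the residual Galois module; at $v\mid p$ the splitting hypothesis in \textbf{(Heeg)} gives $K_\p=\Qp$, and the matching of the plus/minus subspaces reduces to known cyclotomic results of Kim and Ahmed--Lim. Proposition~\ref{prop:control} then promotes this to $\mathcal X^\pm(E_1)/p\simeq\mathcal X^\pm(E_2)/p$. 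Your attribution of the descent step at $p$ to \textbf{(mod~$p$)} is misplaced: that hypothesis concerns the Heegner modules $\fE_n^\pm$ and relative Shafarevich--Tate groups, and enters the argument only through Theorem~\ref{thm:main-no-finite-index}.

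Second, and more seriously, the $\lambda$-formula you propose, $\lambda_\pm(E_1)-\lambda_\pm(E_2)=\sum_\ell\bigl(\delta_\ell(E_1)-\delta_\ell(E_2)\bigr)$, is the \emph{cotorsion} template. Here $\mathcal X^\pm(E_i)$ has $\Lambda$-rank one, and for such a module $M$ with $\mu(M)=0$ and no finite submodules one does \emph{not} have $\dim_{\Fp}(M/pM)=\lambda(M)+1$ in general: the discrepancy is $c(M):=\dim_{\Fp}T_2(M)[p]$, where $T_2(M)$ is the finite cokernel in Jannsen's sequence $0\to M_\tor\to M\to M^{++}\to T_2(M)\to 0$ (Definition~\ref{def:error-term}). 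The paper's actual relation is $\lambda_\pm(E_1)+c_\pm(E_1)=\lambda_\pm(E_2)+c_\pm(E_2)$, obtained by feeding the isomorphism $\mathcal X^\pm(E_1)/p\simeq\mathcal X^\pm(E_2)/p$ into the rank-one structure theory of \cite{MRL2}. Since under \textbf{(Tam)} all your local defects $\delta_\ell$ in fact vanish, your formula would assert $\lambda_\pm(E_1)=\lambda_\pm(E_2)$, which is stronger than what is proved and is not justified without controlling the $c_\pm$-terms.
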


This result is in the vein of work by Greenberg--Vatsal (\cite{gv}) and extends to the supersingular setting results of Hatley--Lei in the $p$-ordinary case (\cite{hat-lei2,MRL2}). The reader is referred to the introduction to Section \ref{sec:application-variation} for the definitions of $\mu$- and $\lambda$-invariants, and to Theorem \ref{thm:main2} for the full statement.

\begin{remark} 
Our arguments for obtaining these results have some significant differences from the strategy developed by Greenberg in \cite{greenberg-cetraro}, which has been generalized to many different settings (see, e.g., \cite{hat-lei1}, \cite{hat-lei2}, \cite{kidwell16}, \cite{kim09}, \cite{KO}, \cite{Ponsinet}, \cite{WesIIGD}). In \cite{Greenberg11}, Greenberg showed that his strategy works for very general Selmer groups that can be defined by a surjective global-to-local map in cohomology. In the setting studied in this paper, the plus and minus Selmer groups cannot be defined in this way, since they fail Greenberg's so-called CRK hypothesis. On the other hand, both strategies utilize non-primitive Selmer groups, and the influence of Greenberg's approach is amply evident in our own.
\end{remark}

\subsection{Future directions} 

The original motivation for \cite{Ber2} was to strengthen some of the results from \cite{Ber1}. It would be interesting to study generalizations of results in  \cite{Ber1} in the supersingular setting. For example, on adapting Bertolini's strategy in \cite{Ber2} to our plus/minus case, we have imposed a technical hypothesis on the vanishing of certain relative Shafarevich--Tate groups of $E$ along finite layers of $K_\infty/K$, denoted by $\sha_{p^m}^\pm(E,K_{n+1}/K_n)$ (see Lemma \ref{lem:Un} for more details). It would be worthwhile to investigate this assumption further, e.g., to find sufficient conditions and give explicit examples. 

The results of this paper should also have implications for the annihilators of anticyclotomic plus/minus Selmer groups, in the spirit of \cite{bertolini-lincei}; we plan to tackle this question in a future project.

Finally, the first two named authors recently generalized Bertolini's results to the case of $p$-ordinary modular forms of higher weight \cite{MRL2}; it would be interesting to make a similar study of non-ordinary modular forms.

\subsection*{Acknowledgements} We thank Mirela \c{C}iperiani and Matteo Longo for interesting discussions on some of the topics of this paper. We are also grateful to the anonymous referee for several helpful suggestions which clarified some proofs and otherwise improved the exposition of the paper.

\section{Anticyclotomic Iwasawa algebras} \label{sec2}


\subsection{The anticyclotomic $\Z_p$-extension of $K$} \label{anticyclotomic-subsec}

For every $m\in\N$ let $H_{p^m}$ denote the ring class field of $K$ of conductor $p^m$, then set $H_{p^\infty}:=\cup_{m\in\N}H_{p^m}$. There is an isomorphism
\[ \Gal(H_{p^\infty}/K)\simeq\Z_p\times\Delta, \]
where $\Delta$ is a finite group. The \emph{anticyclotomic $\Z_p$-extension} $K_\infty/K$ is the unique $\Z_p$-extension of $K$ contained in $H_{p^\infty}$. It can be characterized as the unique $\Z_p$-extension of $K$ that is Galois and non-abelian (in fact, generalized dihedral) over $\Q$. We can write $K_\infty:=\cup_{m\geq0}K_n$, where $K_n$ is the unique subfield of $K_\infty$ such that
\[ G_n:=\Gal(K_n/K)\simeq\Z/p^n\Z. \]
In particular, $K_0=K$. We also define
\[ G_\infty:=\varprojlim_m G_m=\Gal(K_\infty/K)\simeq\Z_p, \]
where the inverse limit is taken with respect to the natural restriction maps. Finally, for all $n,n'\in\N\cup\{\infty\}$ with $n\leq n'$ we set
\[
\cG_{n'/n}:=\Gal(K_{n'}/K_n).
\]
In particular, $\cG_{n/0}=G_n$ for all $n\in\N$.

\subsection{The Iwasawa algebra $\Lambda$} \label{iwasawa-algebra-subsec}

Throughout our article, we fix a topological generator $\gamma_\infty$ of $G_\infty$. Furthermore, we consider the Iwasawa algebra
\[ \Lambda:=\varprojlim_n\Zp[G_n]=\Z_p[\![G_\infty]\!] \]
attached to $K_\infty/K$, the inverse limit being taken with respect to the maps that are induced by restriction. As is well known (see, e.g., \cite[Proposition 5.3.5]{NSW}), $\gamma_\infty$ determines an isomorphism of topological $\Z_p$-algebras $\Lambda \overset\simeq\longrightarrow\Z_p[\![X]\!]$ such that $\gamma_\infty\mapsto 1+X$. In the rest of the paper, we shall tacitly identify $\Lambda$ with $\Z_p[\![X]\!]$ in this way.

\section{Selmer groups and control theorems}

\subsection{Plus and minus Selmer groups over $K_\infty$} \label{plus/minus-subsec}

We introduce plus and minus norm groups \emph{\`a la} Kobayashi (see \cite{kobayashi}). First of all, define the two sets of indices
\begin{align*}
    S_n^+&:=\left\{0,1,\ldots, n\right\}\cap 2\ZZ;\\
    S_n^-&:=\left\{0,1,\ldots, n\right\}\cap (2\ZZ+1).
\end{align*}
Let $n\geq1$ be an integer. For $v\in\{\p,\p^c\}$, set
\[
\hat E^\pm(K_{n,v}):=\left\{P\in \hat E(\fM_{K_{n,v}})\;\Big|\;\Tr_{n/m+1}(P)\in \hat E(\fM_{K_{m,v}})\text{ for all }m\in S_n^\pm \right\},
\]
where we write $\hat E(\fM_{K_{n,v}})$ for the formal group of $E$ whose points are defined over the maximal ideal $\fM_{K_{n,v}}$ of the valuation ring of $K_{n,v}$ and 
\[ \Tr_{n/m+1}:\hat E(\fM_{K_{n,v}})\longrightarrow\hat E(\fM_{K_{m+1,v}}) \]
is the trace map on formal groups.

Let $v\in\{\p,\p^c\}$. Following \cite[\S3.3]{kim}, we define 
\begin{equation} \label{defn:kimpm}
\HH^\pm_v:=\bigcup_{n\ge 0} \hat E^\pm(K_{n,v})\otimes\Qp/\Zp.
\end{equation}
For $n\in\N$, we also set
\[
\HH^\pm_{n,v}:=\left(\HH^\pm_v\right)^{\Gal(K_{\infty,v}/K_{n,v})}.
\]

\begin{remark} \label{rmk:kummer-image}
Using the Kummer map and the fact (shown in the proof of Lemma \ref{lem:inj-p} below) that the natural maps
$$H^1(K_n , E[p^\infty ]) \rightarrow H^1(K_{\infty,v} , E[p^\infty ])^{\cG_{\infty/n}}$$
and
$$H^1 (K_{n,v} , E[p^m ]) \rightarrow H^1 (K_{n,v} , E[p^\infty])[p^m ]$$
are isomorphisms, we may identify $\HH^\pm_v$ as a $\Lambda$-submodule of $H^1(K_{\infty,v},A):=\varinjlim_n H^1(K_{n,v},A)$. In turn, we may identify $\HH^\pm_{n,v}$ and $\HH^\pm_{n,v}[p^m]$ as submodules of $H^1(K_{n,v},A)$ and $H^1(K_{n,v},A_m)$, respectively.
\end{remark}

In line with our general notational conventions, let $(\HH_v^\pm)^\vee$ be the Pontryagin dual of $\HH_v^\pm$.

\begin{proposition}\label{prop:kim}
\begin{enumerate}[label=(\alph*)]
    \item The $\Lambda$-module $(\HH_v^\pm)^\vee$ is free of rank one.
    \item Let $m,n\in\N$. Under the local Tate pairing 
    \[ H^1(K_{n,v},A_m)\times H^1(K_{n,v},A_m)\longrightarrow \ZZ/p^m\ZZ, \]
    the exact annihilator of $\HH_{n,v}^\pm[p^m]$ is $\HH_{n,v}^\pm[p^m]$.
\end{enumerate}
\end{proposition}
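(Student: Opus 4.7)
The plan is to reduce part (a) to Kim's generalization of Kobayashi's local theory, and then deduce part (b) from part (a) via a standard isotropy-plus-counting argument.

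Since \textbf{(Heeg)} forces $p$ to split in $K$, for $v\in\{\p,\p^c\}$ the completion $K_v$ is simply $\Qp$, and our running totally ramified hypothesis makes $K_{\infty,v}/\Qp$ a totally ramified $\ZZ_p$-extension. The definition of $\hat E^\pm(K_{n,v})$ and of $\HH^\pm_v$ then agrees literally with the one in \cite[\S3]{kim}, so part (a) is a direct application of Kim's structural theorem in that setting, which in turn generalizes Kobayashi's original result in \cite{kobayashi}. Were one to reproduce the argument, the key ingredients are Pollack's half-logarithms $\log^\pm_E$ on the formal group $\hat E$ from \cite{pollack-duke}, the identification of $\hat E^\pm(K_{n,v})$ (modulo torsion) with the kernel of $\log^\mp_E$, and an explicit description of the resulting $\Lambda$-module structure.

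For part (b), I would first establish the self-orthogonality $\HH^\pm_{n,v}[p^m]\subseteq\bigl(\HH^\pm_{n,v}[p^m]\bigr)^\perp$. Any class in $\HH^\pm_{n,v}[p^m]$ lifts, at some sufficiently large level $N\ge n$, to the Kummer image of a point in $\hat E^\pm(K_{N,v})$; and Kummer images of formal-group points are classically isotropic under local Tate duality. Using the projection formula $\langle x,\cores(y)\rangle_n=\langle\res(x),y\rangle_N$ for the Tate pairing, one transfers this isotropy at level $N$ down to level $n$, exploiting precisely the trace conditions built into the definition of $\hat E^\pm$.

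The inclusion is then forced to be an equality by counting. Part (a) says $(\HH^\pm_v)^\vee\cong\Lambda$, so $\HH^\pm_v[p^m]$ is Pontryagin dual to $\Lambda/p^m$, and taking $\cG_{\infty/n}$-invariants yields $|\HH^\pm_{n,v}[p^m]|=\bigl|\Lambda/\bigl(p^m,(1+X)^{p^n}-1\bigr)\bigr|=p^{mp^n}$. On the other hand, local Tate duality together with the self-duality $A_m\cong A_m^{*}$ coming from the Weil pairing and the local Euler characteristic formula give $|H^1(K_{n,v},A_m)|=p^{2mp^n}\cdot|A_m(K_{n,v})|^2$; the vanishing of $A_m(K_{n,v})$ along a totally ramified $\ZZ_p$-extension of $\Qp$ in the supersingular setting (a consequence of the irreducibility of $E[p]$ as a $G_{\Qp}$-module) then produces the relation $|\HH^\pm_{n,v}[p^m]|^2=|H^1(K_{n,v},A_m)|$, which together with the isotropy established above forces the inclusion to be an equality. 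The main obstacle I anticipate is the transfer-of-isotropy step, since $\HH^\pm_{n,v}$ is defined as a Galois-invariant subspace of a direct limit rather than directly as a trace-compatible system at level $n$, so some careful bookkeeping is needed to match classes in $\HH^\pm_{n,v}[p^m]$ with Kummer images at higher levels and to track the projection formula correctly.
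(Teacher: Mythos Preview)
Your plan is correct and matches the paper's own approach, which simply cites Kim: part (a) is \cite[Proposition~3.13]{kim} (for the minus sign) together with its extension \cite[Proposition~2.11]{kimZp2} (for the plus sign), and part (b) follows from the proof of \cite[Proposition~3.15]{kim}. The counting argument you outline for (b) is exactly right, and your reduction to $K_v=\Qp$ via \textbf{(Heeg)} is the point.

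The one place where your sketch is genuinely incomplete is the isotropy step, and you are right to flag it. The projection-formula transfer you propose does not close on its own: if $x,y\in\HH^\pm_{n,v}[p^m]$ and you realize $\res_n^N(x),\res_n^N(y)$ inside the Kummer image of $\hat E^\pm(K_{N,v})$ for some large $N$, then the identity $\langle\res(x),\res(y)\rangle_N=\langle x,\cores\res(y)\rangle_n=p^{N-n}\langle x,y\rangle_n$ only kills $p^{N-n}\langle x,y\rangle_n$, which is vacuous once $N-n\ge m$. There is no a priori bound on $N$, so ``exploiting the trace conditions'' does not by itself rescue the argument in the form you state it. What Kim actually does (and what the paper is invoking) is closer to the following: one shows $\hat E^\pm(K_{n,v})\otimes\Z/p^m\Z\subseteq\HH^\pm_{n,v}[p^m]$ directly, and uses that $\hat E^\pm(K_{n,v})\otimes\Z/p^m\Z$ is \emph{already} its own exact annihilator at level $n$ (the Kobayashi/Iovita--Pollack self-duality, valid over any totally ramified $\Zp$-extension of $\Qp$). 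Then the chain
\[
\bigl(\HH^\pm_{n,v}[p^m]\bigr)^\perp\subseteq\bigl(\hat E^\pm(K_{n,v})\otimes\Z/p^m\Z\bigr)^\perp=\hat E^\pm(K_{n,v})\otimes\Z/p^m\Z\subseteq\HH^\pm_{n,v}[p^m],
\]
combined with your cardinality computation $|\HH^\pm_{n,v}[p^m]|^2=|H^1(K_{n,v},A_m)|$, forces all inclusions to be equalities. This avoids the level-transfer problem entirely: isotropy is established at level $n$ from the start, not imported from level $N$.
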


\begin{proof}
Part (a) is \cite[Proposition~3.13]{kim} when the sign is $-$. This has been subsequently generalized to the $+$ case in \cite[Proposition~2.11]{kimZp2}, as our ground field is $\Qp$.  Part (b) then follows from the proof of \cite[Proposition~3.15]{kim}.
\end{proof}

Now we define Selmer groups for $E$ over $K_\infty$ and over the finite layers of $K_\infty/K$. First of all, for every $n\in\N$ let $S_n$ be the set of places of $K_n$ dividing $Np\infty$, let $K_{n,S_n}$ be the maximal extension of $K_n$ unramified outside $S_n$ and write $H^1_{S_n}(K_n,\star)$ as a shorthand for $H^1(K_{n,S_n}/K_n,\star)$. In the following, it is convenient to set also $A_\infty:=A=E[p^\infty]$.
 
\begin{definition}\label{def:selmer-basic}
Let $m\in\N\cup\{\infty\}$ and $n\in\N$. The \emph{$p^m$-Selmer group of $E$ over $K_n$} is 
\[
\Sel_{p^m}(E/K_n):=\ker\Bigg(H_{S_n}^1(K_n,A_m)\longrightarrow \prod_{v \in S_n}\frac{H^1(K_{n,v},A_m)}{E(K_{n,v})/p^m}\Bigg).
\]
The \emph{$p^m$-Selmer group of $E$ over $K_\infty$} is 
\[
\Sel_{p^m}(E/K_\infty):=\varinjlim_n\Sel_{p^m}(E/K_n),
\]
the direct limit being taken with respect to the restriction maps.
\end{definition}

We also introduce plus/minus Selmer groups \emph{\`a la} Kobayashi, along the lines of \cite{kim}.

\begin{definition}
Let $m\in\N\cup\{\infty\}$ and $n\in\N$. The \emph{plus/minus Selmer groups of $E$ over $K_n$} are
\[
\Sel_{p^m}^\pm(E/K_n):=\ker\Bigg(H_{S_n}^1(K_n,A_m)\longrightarrow \prod_{v \in S_n, v\nmid p}\frac{H^1(K_{n,v},A_m)}{E(K_{n,v})/p^m}\times \prod_{v \mid p}\frac{H^1(K_{n,v},A_m)}{\HH^\pm_{n,v}[p^m]}\Bigg).
\]
The \emph{plus/minus Selmer groups of $E$ over $K_\infty$} are
\[
\Sel_{p^m}^\pm(E/K_\infty):=\varinjlim_n\Sel_{p^m}^\pm(E/K_n),
\]
the direct limit being taken with respect to the restriction maps.
\end{definition}

\begin{remark}
Our definitions are different from those in \cite{IP} and \cite{kobayashi}, unless the base field is $K_0=K$ or $K_\infty$. We have followed \cite[\S4.4]{kim} because we would like our Selmer conditions at $p$ to satisfy part (b) of Proposition~\ref{prop:kim} for our applications later. 
\end{remark}

\begin{remark}
If $m=1$, then we omit the superscript and simply write, e.g., $\Sel_p^\pm(E/K_n)$.
\end{remark}

The next result deals with the $n=0$ case.

\begin{lemma}\label{lem:same}
For all $m\in\N\cup\{\infty\}$, there is an equality
\[
\Sel_{p^m}^\pm(E/K)=\Sel_{p^m}(E/K).
\]
\end{lemma}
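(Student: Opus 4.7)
My plan is to reduce the lemma to a local comparison at the two primes $v\in\{\p,\p^c\}$ of $K$ above $p$, and then check that comparison directly. At every place $v\nmid p$, the local conditions defining $\Sel_{p^m}(E/K)$ and $\Sel^\pm_{p^m}(E/K)$ are identical by construction, so the entire content of the statement is the equality
\[
E(K_v)/p^m \;=\; \HH^\pm_{0,v}[p^m]\qquad\text{inside } H^1(K_v,A_m)
\]
for $v\in\{\p,\p^c\}$. Using the identification $H^1(K_v,A_m)\simeq H^1(K_v,A)[p^m]$ from Remark~\ref{rmk:kummer-image}, I would rephrase this as the equality of divisible subgroups $E(K_v)\otimes\Qp/\Zp=\HH^\pm_{0,v}$ inside $H^1(K_v,A)$ and then take $p^m$-torsion at the end.

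For the inclusion $E(K_v)\otimes\Qp/\Zp\subseteq\HH^\pm_{0,v}$, I would exploit the fact that at level zero the trace-compatibility conditions defining $\hat E^\pm(K_{0,v})$ are vacuous, so $\hat E^\pm(K_{0,v})=\hat E(\fM_{K_v})$. Because $p$ splits in $K$ we have $K_v=\Qp$, and because $E$ has supersingular reduction with $a_p(E)=0$ one has $\#\tilde E(\Fp)=p+1$, which is coprime to $p$; hence the reduction-kernel inclusion $\hat E(\fM_{K_v})\hookrightarrow E(K_v)$ induces an isomorphism after $\otimes\Qp/\Zp$. Transporting along the Kummer map places $E(K_v)\otimes\Qp/\Zp$ inside $\HH^\pm_v$, and because it is built from classes defined over $K_v$ it is $G_\infty$-invariant, so it in fact lands in $\HH^\pm_{0,v}$.

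The reverse inclusion is where the real work lies, and here I would invoke Proposition~\ref{prop:kim}(a): the Pontryagin dual $(\HH^\pm_v)^\vee$ is free of rank one over $\Lambda\simeq\Zp[\![X]\!]$, with $X=\gamma_\infty-1$. Dualizing and passing to $G_\infty$-invariants gives
\[
\HH^\pm_{0,v} \;\simeq\; \Hom\bigl(\Lambda/X\Lambda,\Qp/\Zp\bigr) \;\simeq\; \Hom(\Zp,\Qp/\Zp) \;\simeq\; \Qp/\Zp.
\]
On the other hand $E(\Qp)$ has $\Zp$-rank one (with finite torsion that disappears after $\otimes\Qp/\Zp$), so $E(K_v)\otimes\Qp/\Zp\simeq\Qp/\Zp$ as well. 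An injection $\Qp/\Zp\hookrightarrow\Qp/\Zp$ is automatically surjective---its cokernel is a divisible abelian group of $\Zp$-corank zero, hence trivial---so the inclusion of the previous paragraph must be an equality.

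The main obstacle is this last comparison step: the two groups in question are a priori only loosely comparable (one is a Kummer image, the other a Galois-invariant construction), and the identification of $\HH^\pm_{0,v}$ as a corank-one divisible group, which is what upgrades the inclusion to an equality, is not formal but depends on the structural input of Proposition~\ref{prop:kim}(a).
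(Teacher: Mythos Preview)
Your proof is correct and follows essentially the same approach as the paper: reduce to the local comparison at $v\mid p$, establish the inclusion $E(K_v)\otimes\Qp/\Zp\subset\HH^\pm_{0,v}$, and then use Proposition~\ref{prop:kim}(a) to match coranks and upgrade the inclusion to an equality. The only cosmetic difference is that the paper works directly at finite level $p^m$ (noting both sides are free of rank one over $\ZZ/p^m\ZZ$), whereas you first pass to the divisible level and take $p^m$-torsion at the end; your version has the minor advantage of handling $m=\infty$ uniformly.
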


\begin{proof}
In light of Remark \ref{rmk:kummer-image}, it suffices to show that $\HH^\pm_{0,v}[p^m]$ coincides with the image of $E(K_v)/p^mE(K_v)$ in $H^1(K_v,A_m)$ under the Kummer map. Indeed, $\HH^\pm_{0,v}[p^m]$, $E(K_v)/p^mE(K_v)$ and $H^1(K_v,A_m)\big/\bigl(E(K_v)/p^mE(K_v)\bigr)$ are all free of rank one over $\Z/p^m\Z$. On the other hand, $E(K_v)/p^mE(K_v)$ is contained in $\HH^\pm_{0,v}[p^m]$, as explained in the proof of \cite[Proposition~3.15]{kim}, and the result follows.
\end{proof}

\subsection{Control theorems}

Our goal is to prove a control theorem for the plus and minus Selmer groups in our context (Proposition \ref{prop:control}) generalizing the corresponding result in the ordinary case (see, e.g., \cite[\S 2.3]{Ber1}).

\begin{lemma}\label{lem:inj-p}
Let $v\in\{\p,\p^c\}$ and $m,n,n'\in\N\cup\{\infty\}$ with $n\leq n'$. The natural maps
\begin{equation}\label{eq:inj}
\frac{H^1(K_{n,v},A_m)}{\HH^\pm_{n,v}[p^m]}\longrightarrow \frac{H^1(K_{n',v},A_m)}{\HH^\pm_{n',v}[p^m]},\qquad \frac{H^1(K_{n,v},A_m)}{\HH^\pm_{n,v}[p^m]}\longrightarrow\frac{H^1(K_{n,v},A)}{\HH^\pm_{n,v}}
\end{equation}
are injective.
\end{lemma}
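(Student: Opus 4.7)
The plan is to reduce both injectivities to the vanishing $A^{K_{\infty,v}}=E(K_{\infty,v})[p^\infty]=0$; once this is known, the identifications invoked in Remark \ref{rmk:kummer-image} and both quotient-map injections fall out of standard inflation-restriction.

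First I would prove that vanishing by a formal-group computation. Since $p$ splits in $K$, we have $K_v\simeq\Qp$, and $K_{\infty,v}/\Qp$ is totally ramified, so the ramification index of $K_{n,v}$ is $p^n$. Good supersingular reduction with $a_p(E)=0$ implies that any $p$-power torsion of $E(K_{n,v})$ lies in $\hat E(\fM_{K_{n,v}})$; as $\hat E$ has height two, Weierstrass preparation shows that the non-zero roots of $[p](X)$ are roots of a distinguished polynomial of degree $p^2-1$ whose constant term has valuation one. The Newton polygon of this polynomial has a single slope $-1/(p^2-1)$, forcing every non-zero $p$-torsion point of $\hat E$ to have valuation $1/(p^2-1)$. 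Such a valuation is never realized in $K_{n,v}$, whose value group is $(1/p^n)\Z$, because $\gcd(p^n,p^2-1)=1$. Hence $\hat E(\fM_{K_{\infty,v}})[p]=0$, and therefore $A^{K_{\infty,v}}=0$.

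With the vanishing in hand, inflation-restriction applied to $K_{n',v}/K_{n,v}$ and to $K_{\infty,v}/K_{n,v}$ yields injectivity of $H^1(K_{n,v},A_m)\to H^1(K_{n',v},A_m)$ and an isomorphism $H^1(K_{n,v},A)\stackrel{\sim}{\to} H^1(K_{\infty,v},A)^{\cG_{\infty/n}}$ (both the $H^1$ inflation kernel and the $H^2$ obstruction vanish on the trivial module $A^{K_{\infty,v}}=0$). The long exact sequence of $0\to A_m\to A\xrightarrow{p^m}A\to 0$ similarly collapses to an isomorphism $H^1(K_{n,v},A_m)\cong H^1(K_{n,v},A)[p^m]$. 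These are precisely the identifications announced in Remark \ref{rmk:kummer-image}, and they embed $H^1(K_{n,v},A_m)$ into $H^1(K_{\infty,v},A)$ in such a way that $\HH^\pm_{n,v}[p^m]$ coincides with $\HH^\pm_v{}^{\cG_{\infty/n}}[p^m]$.

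The lemma then follows by a short diagram chase. Suppose $x\in H^1(K_{n,v},A_m)$ has restriction lying in $\HH^\pm_{n',v}[p^m]$ (respectively, image in $H^1(K_{n,v},A)$ lying in $\HH^\pm_{n,v}$). Under the identifications above, $x$ corresponds to an element $\tilde x\in H^1(K_{\infty,v},A)$ that is simultaneously $\cG_{\infty/n}$-invariant, annihilated by $p^m$, and contained in $\HH^\pm_v$; hence $\tilde x\in\HH^\pm_v{}^{\cG_{\infty/n}}[p^m]=\HH^\pm_{n,v}[p^m]$, as required. The main obstacle is the Newton polygon step establishing $A^{K_{\infty,v}}=0$; once that is secured, the remainder is formal.
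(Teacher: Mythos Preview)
Your proof is correct and follows essentially the same route as the paper's: both establish $A^{K_{\infty,v}}=0$ (the paper by citing Kobayashi and Iovita--Pollack, you via an explicit Newton-polygon computation) and then use inflation-restriction together with the long exact sequence of $0\to A_m\to A\xrightarrow{p^m}A\to 0$ to obtain the required identifications. The one noteworthy difference is in how the $\HH^\pm$ conditions are matched up: the paper invokes Proposition~\ref{prop:kim}(a) (Kim's result that $(\HH^\pm_v)^\vee$ is $\Lambda$-free of rank one) to deduce $\HH^\pm_{n,v}[p^m]=(\HH^\pm_{n',v}[p^m])^{\cG_{n'/n}}$, whereas your diagram chase uses only the definition $\HH^\pm_{n,v}:=(\HH^\pm_v)^{\cG_{\infty/n}}$, which is slightly more self-contained and shows that Kim's freeness is not actually needed at this particular step.
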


\begin{proof} Since $E$ has supersingular reduction at $p$, $H^0(K_{n',v},A_m)=0$ by the proof of \cite[Proposition~7]{kobayashi} (see also \cite[Lemma~4.6]{IP}). The inflation-restriction exact sequence gives an isomorphism
\begin{equation}\label{eq:res-isom} \res:H^1(K_{n,v},A_m)\overset\simeq\longrightarrow H^1(K_{n',v},A_m)^{\cG_{n'/n}}. \end{equation}
In particular, it gives an injection $H^1(K_{n,v},A_m)\hookrightarrow H^1(K_{n',v},A_m).$ To show that the first map in \eqref{eq:inj} is injective, it is enough to show that
\begin{equation}\label{eq:invariant}
\HH^\pm_{n,v}[p^m]={\HH^\pm_{n',v}[p^m]}^{\cG_{n'/n}}.
\end{equation}
But Proposition \ref{prop:kim}(a) tells us that there is an isomorphism of $\Lambda$-modules
\[ \HH^\pm_{n,v}[p^m]\simeq{(\Z/p^m\Z)[G_n]}^\vee, \]
and similarly for $n'$. Thus \eqref{eq:invariant} holds.

Now we study the second map in \eqref{eq:inj}. Consider the short exact sequence
\[
0\longrightarrow A_m\longrightarrow A\xrightarrow{p^m\cdot}A\longrightarrow 0.
\]
The fact that  $H^0(K_{n,v},A)=0$ implies that
\begin{equation} \label{eq:AMpm}
H^1(K_{n,v},A_m)=H^1(K_{n,v},A)[p^m],
\end{equation}
which gives the second injection.
\end{proof}

We are now in a position to prove a strong control theorem for our plus/minus Selmer groups. In order to obtain an isomorphism (rather than an injection with bounded cokernel) we need the following hypothesis.
\begin{itemize}
\item[\textbf{(Tam)}] Given an elliptic curve $E/\Q$, the prime $p$ does not divide $\#\bigl(E/ E^0\bigr)$.
\end{itemize}
\begin{proposition}\label{prop:control} Let $E/\Q$ be an elliptic curve satisfying \textbf{(Tam)}. Let $m,m',n,n'\in\N\cup\{\infty\}$ with $m\leq m'$ and $n\leq n'$. The restriction map induces an isomorphism of $\Lambda$-modules
\[
\Sel_{p^m}^\pm (E/K_n)\simeq {\Sel_{p^{m'}}^\pm(E/K_{n'})[p^m]}^{\cG_{n'/n}}.
\]
\end{proposition}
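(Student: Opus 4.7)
The plan is to factor the map $\Sel_{p^m}^\pm(E/K_n)\to\Sel_{p^{m'}}^\pm(E/K_{n'})[p^m]^{\cG_{n'/n}}$ through $\Sel_{p^m}^\pm(E/K_{n'})^{\cG_{n'/n}}$ and to show that each of the two resulting maps $\alpha_1$ and $\alpha_2$ is an isomorphism. The recurring input is the vanishing $A^{G_{K_{n',v}}}=0$ for $v\in\{\p,\p^c\}$ (see the proof of Lemma \ref{lem:inj-p}), which forces $A^{G_{K_{n'}}}=0$ and therefore $A_m^{G_{K_{n'}}}=0$ for every $m$.

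For $\alpha_2\colon\Sel_{p^m}^\pm(E/K_{n'})\to\Sel_{p^{m'}}^\pm(E/K_{n'})[p^m]$, I would compare the two defining sequences via the inclusion $A_m\hookrightarrow A_{m'}$. The long exact cohomology attached to $0\to A_m\to A_{m'}\xrightarrow{p^m}A_{m'-m}\to 0$, combined with the vanishing of $H^0(A)$, gives an isomorphism $H^1_{S_{n'}}(K_{n'},A_m)\cong H^1_{S_{n'}}(K_{n'},A_{m'})[p^m]$. The analogous map on local quotients $L_v^{(n',m)}\to L_v^{(n',m')}[p^m]$ is also an isomorphism at each $v$: at $v\mid p$ this uses the second injection in Lemma \ref{lem:inj-p} together with the $p$-divisibility of $\HH^\pm_{n',v}$; at $v\nmid p$ it follows from \textbf{(Tam)} via a standard argument (cf.\ \cite[Lemma 3.3]{greenberg-cetraro}), with archimedean $v$ contributing nothing since $p$ is odd. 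Taking $\cG_{n'/n}$-invariants preserves these isomorphisms.

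For $\alpha_1\colon\Sel_{p^m}^\pm(E/K_n)\to\Sel_{p^m}^\pm(E/K_{n'})^{\cG_{n'/n}}$, I would use the commutative diagram
\[
\begin{CD}
0 @>>> \Sel_{p^m}^\pm(E/K_n) @>>> H^1_{S_n}(K_n,A_m) @>>> \prod_{v\in S_n}L_v^{(n,m)} \\
@. @V{\alpha_1}VV @V{s}VV @V{r}VV \\
0 @>>> \Sel_{p^m}^\pm(E/K_{n'})^{\cG_{n'/n}} @>>> H^1_{S_{n'}}(K_{n'},A_m)^{\cG_{n'/n}} @>>> \Bigl(\prod_{v\in S_{n'}}L_v^{(n',m)}\Bigr)^{\cG_{n'/n}}.
\end{CD}
\]
The inflation-restriction sequence combined with the vanishing $A_m^{G_{K_{n'}}}=0$ shows that $s$ is an isomorphism, so $\alpha_1$ is automatically injective. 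For surjectivity, a routine diagram chase requires injectivity of $r=\prod r_v$: at $v\mid p$ this is the first injection in Lemma \ref{lem:inj-p}, while at nonarchimedean $v\nmid p$ it again follows from \textbf{(Tam)} by the standard argument.

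The step I expect to be most delicate is the local analysis at nonarchimedean $v\nmid p$ under \textbf{(Tam)}, where one must verify injectivity of the restriction map on the quotients $H^1(K_{n,v},A_m)/(E(K_{n,v})/p^m E(K_{n,v}))$. This requires tracking the splitting behavior of $v$ in the anticyclotomic tower $K_\infty/K$ (which depends on whether $v$ is split, inert or ramified in $K$) together with the inertia action on $A$, but it is a routine adaptation of classical control-theorem arguments.
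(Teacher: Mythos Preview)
Your proposal is correct and follows the same two-step strategy as the paper: first pass from $m$ to $m'$ at level $n'$, then from $n$ to $n'$, each time applying the snake lemma with the middle vertical map an isomorphism (from vanishing of $H^0(K_{n'},A_m)$) and the right-hand local maps injective (Lemma~\ref{lem:inj-p} at $v\mid p$, the standard argument under \textbf{(Tam)} at $v\nmid p$). One minor remark: for the $m\to m'$ step only injectivity of the local maps is needed (not the isomorphism you assert), and at $v\nmid p$ this particular step does not actually require \textbf{(Tam)}---the paper cites \cite[\S2.3, Lemma~1,~(1)]{Ber1} there and reserves \textbf{(Tam)} for the $n\to n'$ step.
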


\begin{proof} 
Consider the commutative diagram
\[
\xymatrix{
0\ar[r]&\Sel_{p^m}^\pm (E/K_{n'})\ar[r]\ar[d]& H_{S_{n'}}^1(K_{n'},A_m)\ar[r]\ar[d]& \prod_{v \in S_{n'}}\frac{H^1(K_{n',v},A_m)}{\HH^\pm_{n',v}[p^m]}\ar[d]^{\delta_v^\pm}\\
0\ar[r]&\Sel_{p^{m'}}^\pm (E/K_{n'})[p^m]\ar[r]& H_{S_{n'}}^1(K_{n'},A_{m'})[p^m]\ar[r]& \left(\prod_{v \in S_{n'}}\frac{H^1(K_{n',v},A_{m'})}{\HH^\pm_{n',v}[p^{m'}]}\right)[p^m],}
\]
where we have written $\HH^\pm_{n',v}[p^m]$ and $\HH^\pm_{n',v}[p^{m'}]$ for $E(K_{n',v})/p^{m}$ and $E(K_{n',v})/p^{m'}$ respectively when $v\nmid p$. The middle map is induced by taking cohomology of the short exact sequence
\[
0\longrightarrow A_m\longrightarrow A_{m'}\stackrel{p^m\cdot }\longrightarrow A_{m'-m}\longrightarrow 0.
\] As in the proof of Lemma~\ref{lem:inj-p}, we have $H^0(K_{n'},A_{m'-m})=0$. In particular, the middle vertical map is an isomorphism. When $v|p$, Lemma~\ref{lem:inj-p} says that $\delta_v^\pm$ is injective. When $v\nmid p$, the local condition for the plus/minus Selmer group is the same as that for the ``full'' Selmer group, so the injectivity follows from \cite[\S 2.3, Lemma 1, (1)]{Ber1}. The snake lemma then shows that the first vertical map is an isomorphism as claimed.

Now, consider the commutative diagram
\[
\xymatrix{
0\ar[r]&\Sel_{p^m}^\pm (E/K_n)\ar[r]\ar[d]& H_{S_n}^1(K_n,A_m)\ar[r]\ar[d]& \prod_{v \in S_n}\frac{H^1(K_{n,v},A_m)}{\HH^\pm_{n,v}[p^m]}\ar[d]^{\gamma_v^\pm}\\
0\ar[r]&\Sel_{p^m}^\pm (E/K_{n'})^{\cG_{n'/n}}\ar[r]& H_{S_{n'}}^1(K_{n'},A_m)^{\cG_{n'/n}}\ar[r]& \left(\prod_{v \in S_{n'}}\frac{H^1(K_{n',v},A_m)}{\HH^\pm_{n',v}[p^m]}\right)^{\cG_{n'/n}},}
\]
where we have written $\HH^\pm_{n,v}[p^{m}]$ and $\HH^\pm_{n',v}[p^{m}]$ for $E(K_{n,v})/p^m$ and $E(K_{n',v})/p^{m}$ respectively when $v\nmid p$. As before, $H^0(K_{n'},A_m)=0$. Thus, the inflation-restriction exact sequence says that the middle map is an isomorphism. When $v|p$, Lemma~\ref{lem:inj-p} says that $\gamma_v^\pm$ is injective. For $v \nmid p$, it is well-known (see e.g. \cite[Lemma 3.3]{greenberg-cetraro} and the discussion immediately following it) that the kernel of $\gamma_v^\pm$ is trivial under hypothesis \textbf{(Tam)}. We conclude once again by the snake lemma.
\end{proof}

\section{Plus and minus Heegner points} \label{plus/minus-sec}

\subsection{Plus/minus Mordell--Weil groups}

Let $n\geq1$ be an integer. We define plus and minus norm subgroups of $E(K_n)$ that are global counterparts of the plus/minus norm groups introduced in \S \ref{plus/minus-subsec}. For all $m \in S_n^\pm$ with $m<n$, let 
\[ \Tr_{n/m+1}:E(K_n)\longrightarrow E(K_{m+1}) \]
be the Galois trace map with respect to the group law on $E$.

\begin{definition} \label{plus-minus-def}
The \emph{plus/minus Mordell--Weil groups} of $E$ over $K_n$ are
\[ E^\pm(K_n):=\bigl\{P\in E(K_n)\mid\text{$\Tr_{n/m+1}(P)\in E(K_{m})$ for all $m\in S_n^\pm$ with $m<n$}\bigr\}. \]
\end{definition}

Similar to \eqref{defn:kimpm}, we also define 
\[ \HH^\pm_\infty:=\bigcup_{n\ge 0}E^\pm(K_n)\otimes\Qp/\Zp,\qquad \HH^\pm_n:=\bigl(\HH^\pm_\infty\bigr)^{\Gal(K_\infty/K_n)}. \]

\begin{remark}
The $\Lambda$-module $\HH^\pm_\infty$ (respectively, $\HH^\pm_n$) may be identified with a submodule of $\Sel_{p^\infty}^\pm(E/K_\infty)$ (respectively, $\Sel_{p^\infty}^\pm(E/K_n)$) via the usual Kummer map in Galois cohomology. Analogously, given an integer $m\ge1$, we may view $\HH^\pm_n[p^m]$ as a $\Lambda$-submodule of $\Sel_{p^m}^\pm(E/K_n)$.
\end{remark}

\subsection{Plus/minus Heegner points}\label{sec:plus-minus-heegner-points}

Let $\bigl\{z_n\in E(K_n)\bigr\}_{n\geq1}$ be a compatible family of Heegner points as in \cite[\S 4.2]{LV-BUMI}. Following \cite[\S 4.3]{LV-BUMI}, we give

\begin{definition}
The \emph{plus/minus Heegner points} are
\[ z^+_n:=\begin{cases}z_n&\text{if $n$ is even},\\[3mm]z_{n-1}&\text{if $n$ is odd},\end{cases}\qquad z^-_n:=\begin{cases}z_{n-1}&\text{if $n$ is even},\\[3mm]z_n&\text{if $n$ is odd}.\end{cases}
\]
\end{definition}

Since we have assumed that $a_p(E)=0$, it is a consequence of the formulas in \cite[\S 3.1, Proposition 1]{Perrin-Riou} that the points $z_m^\pm$ satisfy the following relations:
\begin{enumerate}[label=(\alph*)]
   \item$\tr_{K_m/K_{m-1}}(z_m^+)=-z_{m-1}^+$\quad for every even $m\geq2$;
\item $\tr_{K_m/K_{m-1}}(z_m^+)=pz_{m-1}^+$\quad for every odd $m\geq1$;
\item $\tr_{K_m/K_{m-1}}(z_m^-)=pz_{m-1}^-$\quad for every even $m\geq2$;
\item $\tr_{K_m/K_{m-1}}(z_m^-)=-z_{m-1}^-$\quad for every odd $m\geq3$;
\item $\tr_{K_1/K_0}(z_1^-)=\frac{p-1}{2}z_0^-=\frac{p-1}{2}z_0$.
\end{enumerate}
In particular, we see that $z_m^\pm \in E^\pm(K_m)$. 

For all $m,n\in\N$, set $R_{m,n}:=(\ZZ/p^m\ZZ)[G_n]$. As in \cite[\S4.4]{LV-BUMI}, we define $\cE_{m,n}^\pm$ to be the $R_{m,n}$-submodule of $\Sel_{p^m}^\pm(E/K_n)$ generated by $z^\pm_n$. This in turn defines  $\Lambda$-submodules 
\[
\cE^\pm_\infty:=\varinjlim_m\cE_{m,m}^\pm\subset\Sel_{p^\infty}^\pm(E/K_\infty)
\]
as well as the Pontryagin duals
\[
\cH_\infty^\pm:=(\cE_\infty^\pm)^\vee=\varprojlim_m(\cE_{m,m}^\pm)^\vee.
\]
Finally, we introduce the $R_{m,n}$-module
\[
\fE_{m,n}^\pm:=(\cE_{\infty}^\pm)^{\Gal(K_{\infty}/K_n)}[p^m].
\]
When $m=1$, we shall omit the index $m$ from the notation and simply write $R_n$, $\cE_n^\pm$, $\fE_n^\pm$.

We recall the following result on $\cH_\infty^\pm$.

\begin{proposition}
The $\Lambda$-module $\cH_\infty^\pm$ is finitely generated, torsion-free and of rank 1.
\end{proposition}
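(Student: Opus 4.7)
The plan is to identify $\cH_\infty^\pm$ with a nonzero $\Lambda$-submodule of $\Lambda$, from which all three assertions follow at once.

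For each $m\geq 1$, the module $\cE_{m,m}^\pm$ is by construction a cyclic $R_{m,m}$-module, generated by the Kummer image of $z_m^\pm$. The finite ring $R_{m,m}=(\Z/p^m\Z)[G_m]$ is a local complete intersection (the quotient of $\Lambda$ by a regular sequence of length two), hence Gorenstein, so there is an $R_{m,m}$-linear isomorphism $R_{m,m}^\vee\simeq R_{m,m}$. Dualizing the surjection $R_{m,m}\twoheadrightarrow\cE_{m,m}^\pm$, $1\mapsto z_m^\pm$, therefore yields an injection $(\cE_{m,m}^\pm)^\vee\hookrightarrow R_{m,m}$.

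Using the trace compatibilities (a)--(e) listed in $\S\ref{sec:plus-minus-heegner-points}$, the family of surjections $R_{m,m}\twoheadrightarrow\cE_{m,m}^\pm$ can be chosen compatibly with the transition maps in the direct system defining $\cE_\infty^\pm$ and with the natural surjections $R_{m+1,m+1}\twoheadrightarrow R_{m,m}$. Passing to Pontryagin duals and to inverse limits then yields an embedding
\[
\cH_\infty^\pm \;=\; \varprojlim_m \bigl(\cE_{m,m}^\pm\bigr)^\vee \;\hookrightarrow\; \varprojlim_m R_{m,m} \;=\; \Lambda.
\]
Since $\Lambda$ is a Noetherian integral domain, any submodule is finitely generated and torsion-free, and any \emph{nonzero} submodule has $\Lambda$-rank equal to one.

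It therefore remains only to verify $\cH_\infty^\pm\neq 0$, equivalently $\cE_\infty^\pm\neq 0$. Under the Heegner hypothesis \textbf{(Heeg)}, this follows from the fact that the plus/minus Heegner points $z_m^\pm$ are non-torsion in $E(K_m)\otimes\Q_p$ for sufficiently large $m$, a consequence of the non-triviality results of Cornut and Vatsal. The step I expect to be most delicate is the compatibility check in the second paragraph: the trace formulas introduce factors of $-1$, $p$, and $(p-1)/2$, so one must carefully align the choice of Gorenstein duality isomorphisms $R_{m,m}^\vee\simeq R_{m,m}$ so that the level-$m$ injections $(\cE_{m,m}^\pm)^\vee\hookrightarrow R_{m,m}$ assemble into a coherent inverse system.
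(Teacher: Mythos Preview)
The paper's proof of this statement is simply a citation to \cite[Proposition 4.7]{LV-BUMI}, so you are supplying a direct argument where the paper defers to an external reference. Your outline is sound and is in fact the same circle of ideas that the paper itself uses in the immediately following Proposition~\ref{free-prop}, where the cyclicity of each $\cE_{m,m}^\pm$ over $R_{m,m}$ is invoked to conclude that $\cH_\infty^\pm$ is cyclic over $\Lambda$ (and hence free of rank one, given torsion-freeness). Your appeal to Cornut--Vatsal for the non-vanishing, i.e.\ for the rank being exactly one, is the correct external input.

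One point in your write-up deserves tightening. You speak of choosing the surjections $R_{m,m}\twoheadrightarrow\cE_{m,m}^\pm$ ``compatibly with the transition maps in the direct system defining $\cE_\infty^\pm$ and with the natural surjections $R_{m+1,m+1}\twoheadrightarrow R_{m,m}$'', but these arrows point in opposite directions: the $\cE$-system goes up in $m$ (via restriction) while the natural maps on the $R_{m,m}$ go down. What you actually need is that, after one dualization, the injections $(\cE_{m,m}^\pm)^\vee\hookrightarrow R_{m,m}^\vee\simeq R_{m,m}$ are compatible with the projections $R_{m+1,m+1}\twoheadrightarrow R_{m,m}$ and with the maps $(\cE_{m+1,m+1}^\pm)^\vee\to(\cE_{m,m}^\pm)^\vee$ dual to restriction. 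You correctly flag this as the delicate step, and it is indeed where the trace factors $-1$, $p$, $(p-1)/2$ must be absorbed into the choice of Gorenstein isomorphisms $R_{m,m}^\vee\simeq R_{m,m}$; the verification is routine but not vacuous.
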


\begin{proof} This is \cite[Proposition~4.7]{LV-BUMI}. \end{proof}

We strengthen this slightly in the following proposition.

\begin{proposition} \label{free-prop}
The $\Lambda$-module $\cH_\infty^\pm$ is free of rank one.
\end{proposition}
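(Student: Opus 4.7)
The preceding proposition gives that $\cH_\infty^\pm$ is finitely generated, torsion-free, and of rank one over the integral domain $\Lambda$. For such a module, being free is equivalent to being $\Lambda$-cyclic: any generator yields a surjection $\Lambda \twoheadrightarrow \cH_\infty^\pm$ whose kernel $I$ satisfies $\Lambda/I \simeq \cH_\infty^\pm$ torsion-free of rank one, forcing $I = 0$ (a nonzero ideal contains a nonzero element that would act as a nontrivial annihilator on $\Lambda/I$). Hence it suffices to prove that $\cH_\infty^\pm$ is cyclic as a $\Lambda$-module.

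I would apply the topological Nakayama lemma to the compact $\Lambda$-module $\cH_\infty^\pm$: cyclicity is equivalent to $\dim_{\F_p}(\cH_\infty^\pm/\mathfrak{m}\cH_\infty^\pm) \leq 1$, where $\mathfrak{m} = (p, \gamma_\infty - 1)$ is the maximal ideal of $\Lambda$. Pontryagin duality identifies this quotient with $(\cE_\infty^\pm[\mathfrak{m}])^\vee = (\cE_\infty^\pm[p]^{G_\infty})^\vee$, so the task becomes: show that the $G_\infty$-invariant $p$-torsion of $\cE_\infty^\pm$ is at most one-dimensional over $\F_p$.

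To establish this, I would unwind $\cE_\infty^\pm = \varinjlim_m \cE_{m,m}^\pm$; any invariant $p$-torsion class is represented in $\cE_{m,m}^\pm[p]^{G_m}$ for some $m$, which lies in the socle of the cyclic $R_{m,m}$-module $\cE_{m,m}^\pm = R_{m,m}\cdot z_m^\pm$. The Perrin-Riou trace relations (a)--(e) recalled in \S\ref{sec:plus-minus-heegner-points} control iterated Galois traces of the plus/minus Heegner points $z_m^\pm$ down the anticyclotomic tower as signed powers of $p$ times $z_0$, and would be used to force any such socle class, modulo $p$, to be an $\F_p$-multiple of the image of the base Heegner point $z_0$, yielding the desired dimension bound.

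The hard part will be this last step: rigorously extracting from (a)--(e) the one-dimensionality of the $G_m$-fixed $p$-torsion of the cyclic module $\cE_{m,m}^\pm$ at each finite level. A possibly cleaner alternative is to use (a)--(e) directly to build a compatible system of generators $e_m \in (\cE_{m,m}^\pm)^\vee$ for the inverse system defining $\cH_\infty^\pm$; the element $(e_m)_m \in \varprojlim_m (\cE_{m,m}^\pm)^\vee = \cH_\infty^\pm$ would then exhibit a $\Lambda$-generator, giving cyclicity (and hence freeness) directly without invoking Nakayama.
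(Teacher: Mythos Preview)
Your first paragraph—reducing freeness to cyclicity via ``torsion-free of rank one $+$ cyclic $\Rightarrow$ free over the domain $\Lambda$''—is correct and is exactly how the paper finishes.

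For cyclicity itself, however, you take a much longer and ultimately unfinished route. The paper dispatches it in one sentence, using neither Nakayama, nor socle computations, nor the Perrin-Riou trace relations: since \emph{by definition} $\cE_{m,m}^\pm = R_{m,m}\cdot z_m^\pm$ is cyclic over $R_{m,m}$, the inverse limit $\cH_\infty^\pm=\varprojlim_m(\cE_{m,m}^\pm)^\vee$ is cyclic over $\varprojlim_m R_{m,m}=\Lambda$. You actually record the equality $\cE_{m,m}^\pm=R_{m,m}\cdot z_m^\pm$ in your third paragraph, so you have the key input in hand—you just do not recognise that it already suffices.

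Your proposed socle computation is not merely longer but also has a genuine gap as sketched: for $m\ge 2$ the ring $R_{m,m}$ is local Gorenstein but \emph{not} a principal ideal ring, so a cyclic $R_{m,m}$-module can have socle of $\F_p$-dimension greater than one, and the trace relations (a)--(e) only control the iterated full Galois norms of $z_m^\pm$, not arbitrary elements of $(I_m:\mathfrak m)/I_m$ where $I_m=\mathrm{Ann}_{R_{m,m}}(z_m^\pm)$. So the step you yourself flag as ``the hard part'' may not go through along the lines you indicate. Your ``cleaner alternative'' in the final paragraph is much closer in spirit to the paper's argument, but you still plan to invoke (a)--(e) to manufacture compatible generators of the $(\cE_{m,m}^\pm)^\vee$, whereas the paper uses nothing beyond the definitional cyclicity of the $\cE_{m,m}^\pm$.
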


\begin{proof}
By definition, $\cE_{m,m}^\pm$ is a cyclic $R_{m,m}$-module. Thus, $\varprojlim (\cE_{m,m}^\pm)^\vee$ is a cyclic $\varprojlim R_{m,m}$-module. Thus, $\cH^\pm_\infty$ is cyclic over $\Lambda$. Since it is also torsion-free, and since $\Lambda$ is an integral domain, this implies $\cH^\pm_\infty$ is free of rank $1$.
\end{proof}

We deduce 

\begin{corollary}\label{cor:cyclic}
The $R_n$-module $\fE_n^\pm$ is cyclic.
\end{corollary}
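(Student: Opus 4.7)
My approach is to pass to Pontryagin duals and exploit the freeness of $\cH_\infty^\pm$ over $\Lambda$ established in Proposition~\ref{free-prop}.

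Set $\omega_n := \gamma_\infty^{p^n} - 1 \in \Lambda$. Since $\gamma_\infty^{p^n}$ topologically generates $\Gal(K_\infty/K_n)$, for any discrete $\Lambda$-module $M$ we have $M^{\Gal(K_\infty/K_n)} = M[\omega_n]$. Pontryagin duality is an anti-equivalence between compact and discrete $\Lambda$-modules: applied to the short exact sequences defining the $\omega_n$-kernel and the $p$-torsion of a discrete module, it turns these into the quotients by $\omega_n$ and by $p$, respectively, of the compact dual. Combining these two facts yields a natural $R_n$-linear isomorphism
\[
\fE_n^{\pm\,\vee} \;=\; \bigl((\cE_\infty^\pm)^{\Gal(K_\infty/K_n)}[p]\bigr)^{\vee} \;\simeq\; \cH_\infty^\pm \big/ (p,\omega_n)\cH_\infty^\pm.
\]
By Proposition~\ref{free-prop}, $\cH_\infty^\pm \simeq \Lambda$ as $\Lambda$-modules, so the right-hand side is identified with $\Lambda/(p,\omega_n) \simeq \Fp[G_n] = R_n$ as $R_n$-modules.

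It thus remains to show that $R_n^\vee$ is cyclic over $R_n$. For this I would use that the finite group algebra $R_n = \Fp[G_n]$ is a symmetric (in particular, Frobenius) $\Fp$-algebra, so that $R_n^\vee \simeq R_n$ as $R_n$-modules; concretely, writing $R_n \simeq \Fp[y]/(y^{p^n})$ with $y = \gamma_\infty - 1$, the element of $R_n^\vee$ dual to the basis vector $y^{p^n-1}$ is an explicit $R_n$-generator. Consequently $\fE_n^\pm \simeq R_n^\vee$ is free of rank one, and in particular cyclic, over $R_n$. The only point that needs verification is that the identifications above respect the full $R_n$-action and not just the underlying $\Fp$-structure, but this is automatic because Pontryagin duality is an anti-equivalence on $\Lambda$-modules (and hence on $R_n$-modules via $\Lambda \twoheadrightarrow R_n$); so no real obstacle arises.
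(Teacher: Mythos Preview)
Your argument is correct and is essentially the paper's intended proof: both pass to the Pontryagin dual and use Proposition~\ref{free-prop} ($\cH_\infty^\pm\simeq\Lambda$) to identify $(\fE_n^\pm)^\vee$ with a cyclic $R_n$-module, after which the self-duality of the Frobenius algebra $R_n=\Fp[G_n]$ gives the result. Your write-up makes the duality explicit and in fact yields the stronger conclusion $\fE_n^\pm\simeq R_n$ (free of rank one), which the paper's one-line proof leaves unstated.
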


\begin{proof} Immediate from Proposition \ref{free-prop}. \end{proof}

\begin{lemma}\label{lem:Hembeds}
There is an injection of $R_n$-modules
\[
\fE_{m,n}^\pm\longmono\Sel_{p^m}^\pm(E/K_n).
\]
\end{lemma}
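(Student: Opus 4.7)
The plan is to realize $\fE_{m,n}^\pm$ as a submodule of $\Sel_{p^\infty}^\pm(E/K_\infty)^{\cG_{\infty/n}}[p^m]$ and then identify this last module with $\Sel_{p^m}^\pm(E/K_n)$ via the control theorem.

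By construction $\cE_\infty^\pm$ sits inside $\Sel_{p^\infty}^\pm(E/K_\infty)$ as a $\Lambda$-submodule. Since the formation of $\cG_{\infty/n}$-invariants is left exact and the $p^m$-torsion functor preserves inclusions, we obtain a canonical injection
\[
\fE_{m,n}^\pm \;=\; (\cE_\infty^\pm)^{\cG_{\infty/n}}[p^m] \;\longmono\; \Sel_{p^\infty}^\pm(E/K_\infty)^{\cG_{\infty/n}}[p^m].
\]
This is a morphism of modules over $R_{m,n}=\Lambda\big/(\gamma_\infty^{p^n}-1,p^m)$, hence in particular of $R_n$-modules.

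Next, since $\cG_{\infty/n}$-invariants and $p^m$-torsion are both kernels of endomorphisms of abelian groups, they commute, giving
\[
\Sel_{p^\infty}^\pm(E/K_\infty)^{\cG_{\infty/n}}[p^m] \;=\; \Sel_{p^\infty}^\pm(E/K_\infty)[p^m]^{\cG_{\infty/n}}.
\]
Finally, applying Proposition \ref{prop:control} with $m'=\infty$ and $n'=\infty$ yields an isomorphism of $R_{m,n}$-modules
\[
\Sel_{p^m}^\pm(E/K_n) \;\simeq\; \Sel_{p^\infty}^\pm(E/K_\infty)[p^m]^{\cG_{\infty/n}}.
\]
Composing the injection above with the inverse of this isomorphism produces the desired embedding $\fE_{m,n}^\pm \longmono \Sel_{p^m}^\pm(E/K_n)$.

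There is no serious obstacle here: the only nontrivial input is Proposition \ref{prop:control}, which relies on assumption \textbf{(Tam)} (in force throughout this part of the paper). The remaining steps are formal left-exactness arguments and the commutativity of two kernel-type functors, and equivariance for the $R_n$-action (equivalently, the $\Lambda$-action through the quotient) is automatic from naturality of restriction and of multiplication by $p^m$.
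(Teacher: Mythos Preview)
Your argument is correct. The route you take differs slightly from the paper's, and it is worth noting the contrast.

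The paper does not invoke Proposition~\ref{prop:control}. Instead it uses the intermediate chain
\[
\cE_\infty^\pm \subset \HH_\infty^\pm \subset \Sel_{p^\infty}^\pm(E/K_\infty),
\]
takes $\cG_{\infty/n}$-invariants and $p^m$-torsion of the first inclusion to get $\fE_{m,n}^\pm \subset \HH_n^\pm[p^m]$, and then appeals to the earlier remark that $\HH_n^\pm[p^m]$ already sits inside $\Sel_{p^m}^\pm(E/K_n)$ via the Kummer map. Your approach collapses this to a single step by working directly with the inclusion $\cE_\infty^\pm \subset \Sel_{p^\infty}^\pm(E/K_\infty)$ and then applying the control theorem to descend to level $n$. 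Both are short and formally similar (left-exactness of invariants and torsion is the engine in each case). The paper's version has the modest advantage of recording the sharper containment $\fE_{m,n}^\pm \subset \HH_n^\pm[p^m]$, which is conceptually natural since the Heegner module is built out of global points; your version is cleaner as a self-contained proof but calls on Proposition~\ref{prop:control}, hence on hypothesis \textbf{(Tam)}, which the paper's argument does not strictly need here.
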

\begin{proof}
We may identify $\fE^\pm_{m,n}$ with a submodule of $H^1(K_n,A_m)$ via the Kummer map. On the other hand, the inclusion $\cE_\infty^\pm\subset \HH_\infty^\pm$ induces an inclusion $\fE^\pm_{m,n}\subset\HH^\pm_n[p^m]$, and the lemma is proved. \end{proof}

From now on we shall view $\fE_{m,n}^\pm$ as a submodule of $\Sel_{p^m}^\pm(E/K_n)$ via Lemma~\ref{lem:Hembeds} without further notice.

\begin{lemma}\label{lem:embeds}
Let $n\ge1$ be an integer. There is a natural injection $\fE^\pm_{m,n}\hookrightarrow\fE_{m,n+1}^\pm$ whose image is $\bigl(\fE_{m,n+1}^\pm\bigr)^{\cG_{n+1/n}}$.
\end{lemma}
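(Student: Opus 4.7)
My plan is to unwind the definition of $\fE_{m,n}^\pm$ directly and observe that the claim reduces to a purely formal manipulation of Galois invariants in a tower of subgroups. Recall that, by definition, $\fE_{m,n}^\pm = (\cE_\infty^\pm)^{\Gal(K_\infty/K_n)}[p^m]$. Since $K_n \subset K_{n+1}$, we have a containment $\Gal(K_\infty/K_{n+1}) \subseteq \Gal(K_\infty/K_n)$, which immediately gives $(\cE_\infty^\pm)^{\Gal(K_\infty/K_n)} \subseteq (\cE_\infty^\pm)^{\Gal(K_\infty/K_{n+1})}$. Intersecting with the $p^m$-torsion yields the natural inclusion $\fE_{m,n}^\pm \subseteq \fE_{m,n+1}^\pm$ as submodules of $\cE_\infty^\pm$; this is tautologically injective. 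Compatibility with the embeddings of Lemma \ref{lem:Hembeds} into $\Sel_{p^m}^\pm(E/K_n)$ and $\Sel_{p^m}^\pm(E/K_{n+1})$ follows from the functoriality of the Kummer map together with the restriction maps, via the same inflation-restriction argument invoked in Lemma \ref{lem:inj-p} (where $H^0(K_{n+1,v},A_m)=0$ makes restriction injective on the relevant cohomology groups).

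For the characterization of the image, I would exploit the short exact sequence of Galois groups
\[
1 \longrightarrow \Gal(K_\infty/K_{n+1}) \longrightarrow \Gal(K_\infty/K_n) \longrightarrow \cG_{n+1/n} \longrightarrow 1.
\]
The action of $\Gal(K_\infty/K_n)$ on $\cE_\infty^\pm$ descends to an action of $\cG_{n+1/n}$ on the submodule $(\cE_\infty^\pm)^{\Gal(K_\infty/K_{n+1})}$, and this action preserves the $p^m$-torsion; so $\cG_{n+1/n}$ acts naturally on $\fE_{m,n+1}^\pm$. An element $x \in \fE_{m,n+1}^\pm$ is $\cG_{n+1/n}$-invariant if and only if it is fixed by the full group $\Gal(K_\infty/K_n)$, i.e., if and only if $x \in (\cE_\infty^\pm)^{\Gal(K_\infty/K_n)}[p^m] = \fE_{m,n}^\pm$. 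This gives exactly the claimed equality of submodules.

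There is no real obstacle here beyond bookkeeping: the statement is essentially a tautology once one accepts the definition of $\fE_{m,n}^\pm$ as Galois invariants inside $\cE_\infty^\pm$. The only point requiring a moment of care is verifying that the intrinsic description in terms of $\cE_\infty^\pm$ matches the extrinsic description via $\Sel_{p^m}^\pm$-groups from Lemma \ref{lem:Hembeds}, but this compatibility is immediate from the naturality of the Kummer map under restriction along the tower $K_n \subset K_{n+1} \subset K_\infty$.
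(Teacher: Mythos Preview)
Your proof is correct. Both arguments rest on the same principle---taking invariants in two stages along the tower $K_n\subset K_{n+1}\subset K_\infty$---but the paper routes this through the ambient cohomology: it embeds $\fE_{m,n}^\pm$ and $\fE_{m,n+1}^\pm$ into $H^1(K_n,A_m)$ and $H^1(K_{n+1},A_m)$ via Lemma~\ref{lem:Hembeds}, then invokes the inflation--restriction isomorphism $H^1(K_n,A_m)\simeq H^1(K_{n+1},A_m)^{\cG_{n+1/n}}$ (which uses $H^0(K_{n+1},A_m)=0$). Your argument instead stays inside $\cE_\infty^\pm$ and uses the purely formal identity $\bigl(M^{\Gal(K_\infty/K_{n+1})}\bigr)^{\cG_{n+1/n}}=M^{\Gal(K_\infty/K_n)}$ directly from the definition $\fE_{m,n}^\pm=(\cE_\infty^\pm)^{\Gal(K_\infty/K_n)}[p^m]$. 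This is slightly more elementary, since the vanishing of $H^0$ is not needed for the core statement; you only invoke it (correctly) to check compatibility with the Selmer-group embeddings of Lemma~\ref{lem:Hembeds}. The paper's route has the advantage of making the map explicit as a restriction in cohomology, which is how $\fE_{m,n}^\pm$ is used elsewhere.
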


\begin{proof}
As in the proof of Lemma~\ref{lem:Hembeds}, we may identify $\fE^\pm_{m,n}$ and $\fE^\pm_{m,n+1}$ as submodules of $H^1(K_n,A_m)$ and $H^1(K_{n+1},A_m)$, respectively. Furthermore, as in the proof of Lemma~\ref{lem:inj-p}, the inflation-restriction exact sequence gives an isomorphism
\[
H^1(K_n,A_m)\simeq H^1(K_{n+1},A_m)^{\cG_{n+1/n}},
\]
and the result follows.
\end{proof}

\section{On proper $\Lambda$-submodules of finite index} 
\subsection{Admissible classes}\label{sec:admissable}
We prove the supersingular analogues of \cite[Theorem~3.2]{BD2}, which will serve as one of the key ingredients in the proof of Proposition~\ref{prop:tatepairing} below. Throughout, we fix integers $m,n\in\N$. We also assume that the hypothesis \textbf{(Tam)} holds. 

\begin{definition}
 A prime $v$ of $K$ is \emph{admissible} for $(E,K_n,p^m)$ if
  \begin{enumerate}
      \item $E$ has good reduction at $v$;
      \item $v$ does not divide $p$;
      \item $v$ splits completely in $K_n/K$;
      \item the group $E(K_v)/p^m$ is isomorphic to $(\ZZ/p^m)^2$.
  \end{enumerate}

 A set $\Sigma$ of primes of $K$ is \emph{admissible} for $(E,K_n,p^m )$ if 
  \begin{enumerate}
      \item all $v\in \Sigma$ are admissible for $(E,K_n,p^m)$;
      \item the map $\Sel_{p^m}(E/K)\rightarrow \prod_{v\in \Sigma}E(K_v)/p^m$ is injective.
  \end{enumerate}
\end{definition}
We recall from \cite[Lemma~2.23]{BD2} that admissible sets exist. In the rest of this section, we fix an admissible set $\Sigma$. We give the following supersingular analogues of admissible classes defined in \cite[Definition~2.24]{BD2}.

\begin{definition}
  The group of plus/minus admissible classes with respect to $\Sigma$ in $H^1(K_n,A_m)$ is
  \[
  H^1_{\Sigma,\pm}(K_n,A_m)=\ker\Biggl(H^1(K_n,A_m)\longrightarrow \prod_{v\notin \Sigma,v\nmid p}\frac{H^1(K_{n,v},A_m)}{E(K_{n,v})/p^m}\times \prod_{v|p}\frac{H^1(K_{n,v},A_m)}{\HH^\pm_{n,v}[p^m]}\Biggr).
  \]
\end{definition}

\begin{remark}\label{rk:admissible}
Note that $H^1_{\Sigma,\pm}(K_n,A_m)\subset H^1_{S_n}(K_n,A_m)$, as $\Sigma$ contains no bad primes.
\end{remark}

We can  generalize Lemmas~2.25, 2.26, 2.27 and 3.1 of \cite{BD2} to the supersingular setting.

\begin{lemma}\label{lem:BD}
\begin{itemize}
  \item[(a)]The map $\Sel_{p^m}^\pm(E/K_n)\rightarrow \bigoplus_{v\in\Sigma} E(K_{n,v})/p^m$ is injective.
\item[(b)]The map $\prod_{v\in \Sigma}\frac{H^1(K_{n,v},A_m)}{E(K_{n,v})/p^m}\rightarrow \Hom(\Sel_{p^m}^\pm(E/K_n),\ZZ/p^m)$ is surjective.
\item[(c)] The restriction map $H^1_{\Sigma,\pm}(K,A_m)\rightarrow H^1_{\Sigma,\pm}(K_n,A_m)^{\cG_{n/0}}$ is an isomorphism.
\item[(d)] There is an exact sequence
\[
0\longrightarrow \Sel_{p^m}^\pm(E/K_n)\longrightarrow H^1_{\Sigma,\pm}(K_n,A_m)\longrightarrow\prod_{v\in \Sigma}\frac{H^1(K_{n,v},A_m)}{E(K_{n,v})/p^m}\longrightarrow \Sel_{p^m}^\pm(E/K_n)^\vee\longrightarrow 0.
\]
\end{itemize}

\end{lemma}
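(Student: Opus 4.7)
My plan is to handle the four items in the order (a), (c), (d), (b), since (b) will drop out of (d) as an immediate corollary, and (d) in turn requires (a) to pin down the final term of a Poitou--Tate sequence. Throughout I will lean on two facts already in place: the Galois-invariance identity $\HH^\pm_{n,v}[p^m]=\HH^\pm_{n',v}[p^m]^{\cG_{n'/n}}$ from the proof of Lemma~\ref{lem:inj-p}, and the self-duality of the local conditions at $v\mid p$ from Proposition~\ref{prop:kim}(b).

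For part (a), let $N\subset\Sel_{p^m}^\pm(E/K_n)$ be the kernel of the map to $\bigoplus_{v\in\Sigma}E(K_{n,v})/p^m$. Then $N$ is a finite $p$-primary module acted on by the $p$-group $\cG_{n/0}$, so if $N\ne 0$ it has a nontrivial fixed submodule $N^{\cG_{n/0}}$. By the control theorem (Proposition~\ref{prop:control}) applied with $n\leftrightarrow 0$ and $n'\leftrightarrow n$, one has $\Sel_{p^m}^\pm(E/K_n)^{\cG_{n/0}}\cong\Sel_{p^m}^\pm(E/K)$, which coincides with $\Sel_{p^m}(E/K)$ by Lemma~\ref{lem:same}. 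Because each admissible $v\in\Sigma$ splits completely in $K_n/K$, we have $K_{n,v}=K_v$, so the restriction at $v$ of any $\cG_{n/0}$-invariant class in $N$ is its image in $E(K_v)/p^m$. Admissibility of $\Sigma$ then forces such a class to be zero, contradicting $N^{\cG_{n/0}}\ne 0$.

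For part (c), I will combine the inflation-restriction sequence with the compatibility of local conditions under restriction. Under the running hypotheses (ensuring, via irreducibility of $E[p]$ as a $G_K$-module, that $H^i(\cG_{n/0},E(K_n)[p^m])$ contributes nothing), inflation-restriction gives an isomorphism $H^1(K,A_m)\overset\sim\to H^1(K_n,A_m)^{\cG_{n/0}}$. The content is then to check that this isomorphism carries $H^1_{\Sigma,\pm}(K,A_m)$ onto the $\cG_{n/0}$-invariants of $H^1_{\Sigma,\pm}(K_n,A_m)$. At places $v\in\Sigma$ there is no condition to check; at $v\nmid p$ outside $\Sigma$ it is the classical statement (as in \cite[\S 2.3, Lemma 1]{Ber1}); at $v\mid p$ it is precisely the invariance identity \eqref{eq:invariant} combined with the $\cG_{n/0}$-equivariance of the restriction map on $H^1(K_{\bullet,v},A_m)$.

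For part (d) I will invoke the Poitou--Tate/Greenberg--Wiles exact sequence, using that the sum of local Tate pairings induces a perfect duality. Proposition~\ref{prop:kim}(b) shows that the plus/minus condition is its own exact annihilator at $v\mid p$, and standard local duality for elliptic curves shows the same at $v\nmid p$; the ``no condition'' at $v\in\Sigma$ is dual to the ``zero condition''. The resulting five-term sequence reads
\[
0\to \Sel^\natural_{p^m}(E/K_n)\to H^1_{\Sigma,\pm}(K_n,A_m)\to\prod_{v\in\Sigma}\tfrac{H^1(K_{n,v},A_m)}{E(K_{n,v})/p^m}\to \Sel_{p^m}^\pm(E/K_n)^\vee\to 0,
\]
where $\Sel^\natural_{p^m}(E/K_n)$ is the subgroup of $\Sel_{p^m}^\pm(E/K_n)$ of classes vanishing at every $v\in\Sigma$. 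By part (a) this ``minus $\Sigma$'' Selmer group is trivial, giving the four-term sequence of (d). Part (b) is then the surjectivity of the third arrow onto $\Sel_{p^m}^\pm(E/K_n)^\vee$.

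The main obstacle I expect is part (d), specifically confirming that the global duality framework is compatible with the Kim-style plus/minus local conditions at $p$ and with the supersingular local cohomology of $A_m$; once Proposition~\ref{prop:kim}(b) is brought to bear, the remaining work is bookkeeping, but verifying that the admissible primes interact correctly with the Poitou--Tate exact sequence (so that the kernel on the right is exactly the Selmer group killed by restriction to $\Sigma$) is the step where an error is most likely to creep in.
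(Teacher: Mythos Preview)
Your arguments for (a) and (c) are correct and are precisely what the paper does (it simply refers to the control theorem of Proposition~\ref{prop:control} and to the corresponding lemmas in \cite{BD2}).

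The problem is in your sequence for (d). The displayed ``five-term'' sequence you write is not exact at the second term: the kernel of the localisation map
\[
H^1_{\Sigma,\pm}(K_n,A_m)\longrightarrow\prod_{v\in\Sigma}\frac{H^1(K_{n,v},A_m)}{E(K_{n,v})/p^m}
\]
consists of those admissible classes which in addition satisfy the Kummer condition at every $v\in\Sigma$, and that is $\Sel_{p^m}^\pm(E/K_n)$, not your $\Sel^\natural$. The strict-at-$\Sigma$ Selmer group $\Sel^\natural$ is the \emph{dual} Selmer group for the $\Sigma,\pm$ structure, so it appears (dualised) at the far right, not on the left. The correct global-duality sequence is
\[
0\to \Sel_{p^m}^\pm(E/K_n)\to H^1_{\Sigma,\pm}(K_n,A_m)\to\prod_{v\in\Sigma}\frac{H^1(K_{n,v},A_m)}{E(K_{n,v})/p^m}\to \Sel_{p^m}^\pm(E/K_n)^\vee\to \bigl(\Sel^\natural_{p^m}(E/K_n)\bigr)^\vee\to 0,
\]
and now (a), which says exactly that $\Sel^\natural=0$, kills the last term and yields (d), with (b) falling out as you say. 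Note that your sequence as written, after killing $\Sel^\natural$ on the left, would give an injection $H^1_{\Sigma,\pm}\hookrightarrow\prod_\Sigma H^1/(E/p^m)$, which is not (d) and is in fact false whenever $\Sel_{p^m}^\pm(E/K_n)\ne 0$.

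For comparison, the paper orders the deductions as (a)$\Rightarrow$(b)$\Rightarrow$(d): once (a) is known, (b) is obtained by dualising the injection of (a) via local Tate duality at the admissible primes, and (d) then follows from (b) together with the Poitou--Tate description of the image of $H^1_{\Sigma,\pm}$ in $\prod_\Sigma H^1/(E/p^m)$, exactly as in \cite[Lemma~3.1]{BD2}. Your corrected route (a)$\Rightarrow$(d)$\Rightarrow$(b) is of course equivalent.
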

\begin{proof}
Part (a) follows from the same proof as  \cite[Lemma~2.25]{BD2} once we replace the control theorem in the ordinary case (\cite[Lemma~2.19]{BD2}) by Proposition~\ref{prop:control}.

Part (b) follows from part (a), as in the proof of \cite[Lemma~2.26]{BD2}; namely, this is dual to property (2) in the definition of an admissible set of primes.

Part (c) can be proved in the same way as Proposition~\ref{prop:control}. 

Part (d) now follows from part (b) as in the proof of \cite[Lemma~3.1]{BD2}.
\end{proof}

This allows us to prove the following generalization of \cite[Theorem~3.2]{BD2}.
\begin{theorem}\label{thm:BD}
The $R_{m,n}$-module $H^1_{\Sigma,\pm}(K_n,A_m)$ is isomorphic to $\left(R_{m,n}\right)^{2|\Sigma|}$.
\end{theorem}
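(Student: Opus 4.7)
The plan is to adapt the proof of \cite[Theorem~3.2]{BD2} to the supersingular setting, using Lemma~\ref{lem:BD}(d) as the key input together with local Tate duality, a cardinality count, and Nakayama's lemma applied to the local Artinian ring $R_{m,n}$. A preliminary step is to identify the local factor at each $v\in\Sigma$. Since $v$ is admissible, $v$ splits completely in $K_n/K$ and $E(K_v)/p^m\cong(\Z/p^m\Z)^2$, so $E(K_{n,v})/p^m\cong\bigoplus_{w\mid v}E(K_v)/p^m\cong R_{m,n}^2$ as $R_{m,n}$-modules (it is the induced module). Local Tate duality then identifies the quotient $H^1(K_{n,v},A_m)/(E(K_{n,v})/p^m)$ with the Pontryagin dual of $E(K_{n,v})/p^m$; since $G_n$ is a $p$-group, the group ring $R_{m,n}=(\Z/p^m\Z)[G_n]$ is a local Frobenius algebra over $\Z/p^m\Z$ and thus self-dual, yielding $H^1(K_{n,v},A_m)/(E(K_{n,v})/p^m)\cong R_{m,n}^2$ as well. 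Summing over $v\in\Sigma$ gives an $R_{m,n}$-module identification
\[
\bigoplus_{v\in\Sigma}\frac{H^1(K_{n,v},A_m)}{E(K_{n,v})/p^m}\;\cong\;R_{m,n}^{2|\Sigma|}.
\]

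Next, feeding this identification into the four-term exact sequence of Lemma~\ref{lem:BD}(d) and using that Pontryagin duality preserves cardinality (so $|\Sel_{p^m}^\pm(E/K_n)|=|\Sel_{p^m}^\pm(E/K_n)^\vee|$), the alternating-product identity of cardinalities collapses to
\[
\bigl|H^1_{\Sigma,\pm}(K_n,A_m)\bigr|\;=\;|R_{m,n}|^{2|\Sigma|}.
\]
To upgrade this cardinality equality to an $R_{m,n}$-module isomorphism, I would appeal to the following elementary fact: if $R$ is a local Artinian ring with residue field $k$ and $M$ is a finitely generated $R$-module with $|M|=|R|^r$ which is generated by $r$ elements, then any surjection $R^r\twoheadrightarrow M$ is an isomorphism, hence $M\cong R^r$. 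Since $R_{m,n}$ has residue field $\F_p$, it therefore suffices to bound the minimal number of $R_{m,n}$-generators of $H^1_{\Sigma,\pm}(K_n,A_m)$ by $2|\Sigma|$, equivalently to show $\dim_{\F_p}\bigl(H^1_{\Sigma,\pm}(K_n,A_m)/\mathfrak{m}H^1_{\Sigma,\pm}(K_n,A_m)\bigr)\leq 2|\Sigma|$, where $\mathfrak{m}$ is the maximal ideal of $R_{m,n}$.

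The main obstacle is this last generation bound. My approach would be to combine the Galois descent isomorphism of Lemma~\ref{lem:BD}(c) (reducing the computation of $H^1_{\Sigma,\pm}(K_n,A_m)^{G_n}$ to $H^1_{\Sigma,\pm}(K,A_m)$) with a descending induction on $m$ using the short exact sequence $0\to A_1\to A_m\to A_{m-1}\to 0$, to reduce to the base case $m=1,n=0$ where $R_{1,0}=\F_p$ and the cardinality count from the previous paragraph directly gives $\dim_{\F_p}H^1_{\Sigma,\pm}(K,A_1)=2|\Sigma|$. The admissibility of $\Sigma$ is crucial here: by Lemma~\ref{lem:BD}(a), the global-to-local map at $\Sigma$ detects all Selmer elements, so no generators are lost along the way. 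Together with the cardinality identity, this forces $H^1_{\Sigma,\pm}(K_n,A_m)\cong R_{m,n}^{2|\Sigma|}$.
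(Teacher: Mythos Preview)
Your overall strategy matches the paper's (which simply defers to \cite[Theorem~3.2]{BD2} with the lemmas of Lemma~\ref{lem:BD} substituted in): compute the local factors at $\Sigma$, use the four-term exact sequence of Lemma~\ref{lem:BD}(d) for the cardinality count, and then reduce to the base case $m=1$, $n=0$ to control the module structure. The local computation and the cardinality identity are correct.

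There is, however, a genuine gap in your final step. You say it suffices to bound $\dim_{\F_p}\bigl(M/\mathfrak{m}M\bigr)$ by $2|\Sigma|$, where $M=H^1_{\Sigma,\pm}(K_n,A_m)$, and then propose to do this via Lemma~\ref{lem:BD}(c) together with the inclusion $A_1\hookrightarrow A_m$. But those tools compute \emph{invariants}, not coinvariants: Lemma~\ref{lem:BD}(c) gives $M^{G_n}\cong H^1_{\Sigma,\pm}(K,A_m)$, and the short exact sequence $0\to A_1\to A_m\to A_{m-1}\to 0$ (using $H^0(K,A_{m-1})=0$) identifies $H^1_{\Sigma,\pm}(K,A_m)[p]$ with $H^1_{\Sigma,\pm}(K,A_1)$. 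What you actually obtain is
\[
\dim_{\F_p} M[\mathfrak{m}]=\dim_{\F_p} M^{G_n}[p]=\dim_{\F_p} H^1_{\Sigma,\pm}(K,A_1)=2|\Sigma|,
\]
a bound on the \emph{socle} $M[\mathfrak{m}]$, not on the head $M/\mathfrak{m}M$. These are different in general.

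The fix is exactly the Frobenius self-duality you invoked earlier for the local factors but did not carry through here: since $R_{m,n}$ is a local Frobenius ring, Pontryagin duality exchanges socle and head, so $\dim_{\F_p} M[\mathfrak{m}]=\dim_{\F_p}\bigl(M^\vee/\mathfrak{m}M^\vee\bigr)$. Thus $M^\vee$ is generated by $2|\Sigma|$ elements, and since $|M^\vee|=|M|=|R_{m,n}|^{2|\Sigma|}$, Nakayama gives $M^\vee\cong R_{m,n}^{2|\Sigma|}$; applying self-duality of $R_{m,n}$ once more yields $M\cong R_{m,n}^{2|\Sigma|}$. Once you insert this step (or, equivalently, argue directly with the socle bound and self-injectivity of $R_{m,n}$), your proof goes through and agrees with the argument in \cite{BD2}.
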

\begin{proof}
This follows from the same proof as \cite[Theorem~3.2]{BD2}, where we replace the appropriate lemmas in \cite{BD2} with their counterparts given in Lemma~\ref{lem:BD}.
\end{proof}

\subsection{Cohomology, universal norms and perfect pairings}

For all $m,n\in\N$, write $I_{m,n}$ for the augmentation ideal of $R_{m,n}$. The following result generalizes \cite[Proposition~6.3]{Ber2} to our plus/minus setting under the following hypothesis.
\begin{itemize}
\item[\textbf{(Norm)}] The local norm maps
    \[ E(K_{n,v})\longrightarrow E(K_v) \]
    are surjective for all $n\ge1$ and all primes of $K$ that are coprime to $p$.
\end{itemize}
Note that \textbf{(Norm)} is slightly weaker than \cite[Assumption 4]{Ber2} and \cite[Assumption 2.15]{BD2}.

\begin{proposition} \label{prop:tatepairing}
Let $m,n\in\N$. Suppose that \textbf{(Norm)} holds. There is a perfect pairing of Tate cohomology groups
\[
{\langle\cdot,\cdot\rangle}_{m,n} : \hat{H}^0\bigl(G_n,\Sel_{p^m}^\pm(E/K_n)\bigr)\times \hat{H}^{-1}\bigl(G_n,\Sel_{p^m}^\pm(E/K_n)\bigr)\longrightarrow I_{m,n}/I_{m,n}^2.
\]
\end{proposition}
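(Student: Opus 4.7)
The plan is to adapt Bertolini's proof of \cite[Proposition~6.3]{Ber2} to the plus/minus setting, substituting Theorem~\ref{thm:BD} and Lemma~\ref{lem:BD}(d) for their ordinary-case counterparts and using Proposition~\ref{prop:kim}(b) to handle the self-duality of the local conditions at $p$. The starting point is the observation that Theorem~\ref{thm:BD} exhibits $H^1_{\Sigma,\pm}(K_n,A_m)$ as free of rank $2|\Sigma|$ over $R_{m,n}=(\Z/p^m\Z)[G_n]$, hence as a direct sum of copies of the group ring, which is $G_n$-cohomologically trivial. I would then split the four-term exact sequence of Lemma~\ref{lem:BD}(d) into the pair
\begin{align*}
0 &\longrightarrow \Sel_{p^m}^\pm(E/K_n) \longrightarrow H^1_{\Sigma,\pm}(K_n,A_m) \longrightarrow W \longrightarrow 0,\\
0 &\longrightarrow W \longrightarrow \prod_{v\in\Sigma}\frac{H^1(K_{n,v},A_m)}{E(K_{n,v})/p^m} \longrightarrow \Sel_{p^m}^\pm(E/K_n)^\vee \longrightarrow 0,
\end{align*}
where $W$ denotes the common image. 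Taking $G_n$-Tate cohomology of the first sequence and using vanishing of $\hat H^\ast(G_n,H^1_{\Sigma,\pm}(K_n,A_m))$ produces shift-by-one isomorphisms $\hat H^i(G_n,\Sel_{p^m}^\pm(E/K_n))\cong\hat H^{i-1}(G_n,W)$ in every degree.

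The pairing is then extracted by combining these isomorphisms with local Tate duality. Each $v\in\Sigma$ splits completely in $K_n/K$, so Shapiro's lemma writes the corresponding local factor as an induced $G_n$-module. Hypothesis \textbf{(Norm)} provides surjectivity of the local norm maps at $v\nmid p$, from which one deduces $G_n$-cohomological triviality of $E(K_{n,v})/p^m$; under local Tate duality this group is the exact annihilator of $H^1(K_{n,v},A_m)/(E(K_{n,v})/p^m)$. At $v\mid p$ the analogous self-duality of $\HH^\pm_{n,v}[p^m]$ is exactly Proposition~\ref{prop:kim}(b), which is the key supersingular input. Composing the connecting map of the second short exact sequence with these local pairings yields a bilinear form on $\hat H^0(G_n,\Sel_{p^m}^\pm(E/K_n))\times\hat H^{-1}(G_n,\Sel_{p^m}^\pm(E/K_n))$, landing in $I_{m,n}/I_{m,n}^2$ via the canonical identification $I_{m,n}/I_{m,n}^2\cong\hat H^{-1}(G_n,\Z/p^m\Z)$ arising from the short exact sequence $0\to I_{m,n}/I_{m,n}^2\to R_{m,n}/I_{m,n}^2\to \Z/p^m\Z\to 0$.

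The principal obstacle is the bookkeeping required to verify that the resulting form is perfect and correctly valued in $I_{m,n}/I_{m,n}^2$. Perfectness should follow from non-degeneracy of local Tate duality at each $v\in\Sigma$ combined with property (2) of admissibility, which guarantees that no global class maps to $0$ in the local product; the correct cohomological shift is forced by two successive applications of the snake lemma. A subtlety specific to the supersingular setting is to check compatibility of the target identification with the plus/minus local conditions at $p$: here we use both Proposition~\ref{prop:kim}(a), which identifies $\HH^\pm_{n,v}[p^m]$ as a cyclic free module over $R_{m,n}$, and the invariance formula~\eqref{eq:invariant} from the proof of Lemma~\ref{lem:inj-p}, which together imply that the quotient $H^1(K_{n,v},A_m)/\HH^\pm_{n,v}[p^m]$ is sufficiently well behaved under $G_n$-cohomology for the argument to go through.
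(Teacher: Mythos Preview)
Your overall strategy---splitting the four-term sequence of Lemma~\ref{lem:BD}(d), using the freeness of $H^1_{\Sigma,\pm}(K_n,A_m)$ from Theorem~\ref{thm:BD} to obtain cohomological triviality, and then extracting the pairing from the resulting shift isomorphisms---is exactly Bertolini's argument and is the same route the paper takes, only phrased more abstractly where the paper writes out the explicit lift $y=(\gamma_n-1)z$ and the formula $\sum_v[x_v,z_v]_v(\gamma_n-1)$.

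There are, however, two places where your bookkeeping is off. First, every $v\in\Sigma$ is coprime to $p$ by the definition of admissibility, so the local product $\prod_{v\in\Sigma}H^1(K_{n,v},A_m)/(E(K_{n,v})/p^m)$ contains no factor at $v\mid p$; the self-duality of $\HH^\pm_{n,v}[p^m]$ from Proposition~\ref{prop:kim}(b) therefore does not enter at the stage you indicate. Its actual role is twofold: it is what makes the Poitou--Tate argument behind Lemma~\ref{lem:BD}(d) go through (the plus/minus local conditions must be their own annihilators for $\Sel^\pm$ and $(\Sel^\pm)^\vee$ to sit at the two ends of that sequence), and the paper invokes it again in the right non-degeneracy step, where a Milne-style diagram chase requires dualizing the localization diagram and identifying $\bigoplus_{v\in S_n}\HH^\pm_{n,v}[p^m]$ as the dual of $\bigoplus_{v\in S_n}H^1(K_{n,v},A_m)/\HH^\pm_{n,v}[p^m]$. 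Second, your use of \textbf{(Norm)} to obtain $G_n$-cohomological triviality of $E(K_{n,v})/p^m$ for $v\in\Sigma$ is unnecessary: those primes split completely in $K_n/K$, so each local factor is induced from the trivial subgroup and hence automatically cohomologically trivial. In the paper's explicit construction the pairing is written as a sum over \emph{all} primes of $K$, and \textbf{(Norm)} is used to establish the identification \eqref{eq:res-iso} at every $v\nmid p$ (with Proposition~\ref{prop:kim}(a) handling $v\mid p$), so that the local terms $[x_v,z_v]_v$ are well-defined; this is where the hypothesis genuinely enters.
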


\begin{proof}
We fix an admissible set of primes for $(E,K_n,p^m)$, which we denote by $\Sigma$. We write $\Sel_{p^m}^\pm(E/K_n)^0$ for the kernel of the corestriction map $\Sel_{p^m}^\pm(E/K_n)\rightarrow\Sel_{p^m}^\pm(E/K)$. Fix a generator $\gamma_n$ of $G_n$. Recall from Lemma~\ref{lem:BD}(d) that $\Sel_{p^m}^\pm(E/K_n)^0$  is contained in $H^1_{\Sigma,\pm}(K_n,A_m)$, which is a free $R_{m,n}$-module by Theorem~\ref{thm:BD}. In particular, the kernel of the corestriction  $H^1_{\Sigma,\pm}(K_n,A_m)\rightarrow H^1_{\Sigma,\pm}(K,A_m)$ is $(\gamma_n-1)H^1_{\Sigma,\pm}(K_n,A_m)$. Therefore if $y\in \Sel_{p^m}^\pm(E/K_n)^0$, there exists 
$z\in H^1_{\Sigma,\pm}(K_n,A_m)$ satisfying  
\begin{equation}
    y=(\gamma_n-1)z.\label{eq:augmentation}
\end{equation}
Let $z_v$ denote the image of $z$ in $H^1(K_{n,v},A_m)/\HH^\pm_{n,v}[p^m]$, where $v$ is a place of $K_n$ and we denote $E(K_{n,v})/p^m$ by $\HH^\pm_{n,v}[p^m]$ when $v\nmid p$. Let us denote the pairing induced by  local Tate duality by
\[
[\cdot,\cdot]_v:\HH^1_{n,v}[p^m]\times H^1(K_v,A_m)/\HH^1_{n,v}[p^m]\longrightarrow \ZZ/p^m\ZZ.
\]
The definition of $y$ and \eqref{eq:augmentation} together imply that 
\[
z_v\in\left( H^1(K_{n,v},A_m)/\HH^\pm_{n,v}[p^m]\right)^{G_n}.
\]
When $v\nmid p$, the surjectivity of the norm map given by \textbf{(Norm)} implies that the restriction map induces an isomorphism
\begin{equation}
    \left( H^1(K_{n,v},A_m)/\HH^\pm_{n,v}[p^m]\right)^{G_n}\cong H^1(K_{0,v},A_m)/\HH^\pm_{0,v}[p^m]
\label{eq:res-iso}
\end{equation}
since the restriction map on these quotients is dual to the norm map $$E(K_{n,v})/p^m\rightarrow E(K_v)/p^m$$
as given by $[\cdot,\cdot]_v$. The same is true when $v|p$ since $(\HH_v^\pm)^\vee$ is free of rank one over $\Lambda$ by Proposition~\ref{prop:kim}(a).

We define a pairing
\[
{[\cdot,\cdot]}_{m,n}:\Sel_{p^m}^\pm(E/K)\times \Sel_{p^m}^\pm(E/K_n)^0\longrightarrow I_{m,n}/I_{m,n}^2
\]
by sending $(x,y)$ to
\[
\sum_{v}{[x_v,z_v]}_v(\gamma_n-1)\mod I_{m,n}^2,
\]
where the sum runs over all primes of $K$, $x_v$ is the natural image of $x$ in $\HH^\pm_{n,v}[p^m]$ and $z_v$ is identified with its image in $H^1(K_v,A_m)/\HH^\pm_{0,v}[p^m]$ as given by the isomorphism \eqref{eq:res-iso}. 
The argument in \cite[Proposition~6.3]{Ber2} to show that this pairing is independent of the choice of $z$ or $\gamma_n$ carries over to our setting, as it relies on algebraic properties of Galois cohomology only. 

To check that ${[\cdot,\cdot]}_{m,n}$ induces a perfect pairing ${\langle\cdot,\cdot\rangle}_{m,n}$ on the Tate cohomology groups it is enough to show that we have a right non-degenerate pairing, as the two groups have the same order. We extend the proof of \cite[Lemma~6.15]{Milne} to our setting. Consider the commutative diagram
\[
\xymatrix{
\bigoplus_{v\in S_n}H^1(K_{n,v},A_m)\ar[d]\\
\bigoplus_{v\in S_n}\frac{H^1(K_{n,v},A_m)}{\HH^\pm_{n,v}[p^m]}&H^1_{S_n}(K_n,A_m)\ar[l]\ar[ul]&\Sel^\pm_{p^m}(E/K_n)\ar[l]&\ar[l]0.}
\]
 If we take $\Z/p^m\Z$-linear duals, then Proposition~\ref{prop:kim} gives us a commutative diagram
\[
\xymatrix{
\bigoplus_{v\in S_n}H^1(K_{n,v},A_m)\ar[dr]\\
\bigoplus_{v\in S_n}\HH^\pm_{n,v}[p^m]\ar[u]\ar[r]&{H^1_{S_n}(K_n,A_m)}^*\ar[r]&{\Sel^\pm_{p^m}(E/K_n)}^*\ar[r]&0.}
\]
Let $y$ be an element of the right kernel of ${[\cdot,\cdot]}_{m,n}$ and let $z\in H^1_{\Sigma,\pm}(K_n,A_m)$ be as in \eqref{eq:augmentation}. By Remark~\ref{rk:admissible}, we may consider it as an element of $H^1_{S_n}(K_n,A_m)$. As can be checked by a diagram chase, the image of $z$ in $\bigoplus_{v\in S_n} H^1(K_{n,v},A_m)$ splits as $a_1+a_2$, where $a_1$ is in the image of $\bigoplus_{v\in S_n} \HH^\pm_{n,v}[p^m]$ and $a_2$ is in the image of $H^1_{S}(K,A_m)$ under the restriction map composed with the localization maps. Therefore $z$ itself decomposes as $z_1+z_2$, where $z_1\in \Sel_{p^m}^\pm(E/K_n)$ and $z_2\in \image\bigl(H^1_{S}(K,A_m)\hookrightarrow H^1_{S_n}(K_n,A_m)\bigr)$. This shows that $y\in (\gamma_n-1)\Sel_{p^m}^\pm(E/K_n)$. In particular, the image of $y$ in $\hat H^{-1}\bigl(G_n,\Sel_{p^m}^\pm(E/K_n)\bigr)$ is zero. This shows that the pairing ${\langle\cdot,\cdot\rangle}_{m,n}$ is right non-degenerate, as required.
\end{proof}

For every $n\in\N$, define
\[
S_p^\pm(E/K_n):=\varprojlim_m \Sel_{p^m}^\pm(E/K_n),
\]
the inverse limit being taken with respect to the multiplication-by-$p$ maps $E[p^{m+1}] \overset{p\cdot}\longrightarrow E[p^m]$. For all $n,n'\in\N$ with $n\leq n'$ there is a natural corestriction map 
\begin{equation} \label{cores-eq}
\cor_{K_{n'}/K_n}: S_p^\pm(E/K_{n'})\longrightarrow S_p^\pm(E/K_n).
\end{equation}
We can then define the $\Lambda$-module
\[ \hat S^\pm_p(E/K_\infty):=\varprojlim_n S_p^\pm(E/K_n), \]
the inverse limit being taken with respect to the corestriction maps in \eqref{cores-eq}.

The next proposition will be used in the proof of our main result (Theorem \ref{thm:main-no-finite-index}).

\begin{proposition} \label{prop:PR}
There is a canonical isomorphism of $\Lambda$-modules
\[
\hat S^\pm_p(E/K_\infty)\simeq\Hom_\Lambda\bigl(\Sel_{p^\infty}^\pm(E/K_\infty)^\vee,\Lambda\bigr).
\]
\end{proposition}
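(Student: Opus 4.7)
The plan is to deduce the isomorphism from an Iwasawa-theoretic version of Poitou--Tate duality, exploiting the self-duality of the plus/minus local conditions at each finite level and then assembling these across the tower. The crucial structural input is Proposition~\ref{prop:kim}(b), which makes $\HH^\pm_{n,v}[p^m]$ its own annihilator under local Tate duality at places above $p$, together with the analogous self-annihilator property of $E(K_{n,v})/p^m$ at primes not above $p$.

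First, for fixed $m$ and $n$, the Weil pairing identifies $A_m = E[p^m]$ with its own Cartier dual $\Hom(A_m, \mu_{p^m})$. The global Poitou--Tate pairing on $H^1_{S_n}(K_n, A_m)$, combined with the self-annihilator property of each local condition, restricts to a perfect pairing of $\Z_p[G_n]$-modules
\[
\Sel_{p^m}^\pm(E/K_n) \times \Sel_{p^m}^\pm(E/K_n) \longrightarrow \Z/p^m\Z.
\]
Via the standard trace isomorphism $\Z_p[G_n] \simeq \Hom_{\Z_p}(\Z_p[G_n], \Z_p)$, this reinterprets (up to the usual inversion involution on $G_n$) as a $\Z_p[G_n]$-linear identification
\[
\Sel_{p^m}^\pm(E/K_n) \simeq \Hom_{\Z_p[G_n]}\bigl(\Sel_{p^m}^\pm(E/K_n), \Z_p[G_n]/p^m\bigr).
\]

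Next, I would pass to the inverse limit in $m$. The control theorem (Proposition~\ref{prop:control}) gives $\Sel_{p^m}^\pm(E/K_n) = \Sel_{p^\infty}^\pm(E/K_n)[p^m]$, and hence $\varprojlim_m \Sel_{p^m}^\pm(E/K_n)^\vee = \Sel_{p^\infty}^\pm(E/K_n)^\vee$. Combining with the previous step produces a $\Z_p[G_n]$-linear isomorphism
\[
S_p^\pm(E/K_n) \simeq \Hom_{\Z_p[G_n]}\bigl(\Sel_{p^\infty}^\pm(E/K_n)^\vee, \Z_p[G_n]\bigr).
\]
Then, passing to the inverse limit in $n$, using $\Lambda = \varprojlim_n \Z_p[G_n]$ together with the formula
\[
\Hom_\Lambda(X, \Lambda) = \varprojlim_n \Hom_{\Z_p[G_n]}\bigl(X_{G_n}, \Z_p[G_n]\bigr)
\]
valid for finitely generated compact $\Lambda$-modules $X$, applied to $X = \Sel_{p^\infty}^\pm(E/K_\infty)^\vee$, together with the identification $X_{G_n} \simeq \Sel_{p^\infty}^\pm(E/K_n)^\vee$ afforded by the control theorem via the Pontryagin duality between invariants and coinvariants, assembles the pieces into the desired isomorphism.

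The main obstacle will be verifying compatibility of the finite-level dualities with the corestriction maps $\cor_{K_{n'}/K_n}$ on $S_p^\pm$ and with the dual transition maps on $\Sel_{p^\infty}^\pm(E/K_n)^\vee$. This tower-compatibility is an Iwasawa-theoretic Poitou--Tate diagram chase, and it relies crucially on the freeness of rank one of $(\HH_v^\pm)^\vee$ over $\Lambda$ (Proposition~\ref{prop:kim}(a)), which ensures that the local plus/minus conditions assemble rigidly across the tower into a coherent Selmer structure.
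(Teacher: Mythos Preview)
Your proposal is correct and follows essentially the same route as the paper, which simply cites Perrin--Riou's \emph{Lemme~5} and observes that one may replace the ordinary control theorem used there by Proposition~\ref{prop:control}. What you have written is a faithful unpacking of Perrin--Riou's argument: self-duality of the local conditions (Proposition~\ref{prop:kim}(b) at $p$, the classical fact away from $p$) yields a finite-level perfect pairing, and the control theorem lets one pass to the limit in both $m$ and $n$. One small notational slip: when you write $X_{G_n}$ you mean the coinvariants for $\cG_{\infty/n}=\Gal(K_\infty/K_n)$, not for $G_n=\Gal(K_n/K)$; with that correction the formula $\Hom_\Lambda(X,\Lambda)=\varprojlim_n\Hom_{\Z_p[G_n]}(X_{\cG_{\infty/n}},\Z_p[G_n])$ is the one you want.
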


\begin{proof}
One can proceed as in the proof of \cite[Lemme~5]{Perrin-Riou}, replacing the control theorem used in \cite{Perrin-Riou} with Proposition~\ref{prop:control}.
\end{proof}

The \emph{universal norm submodule} of $S^\pm_p(E/K)$ is
\[
US_p^\pm(E/K):=\bigcap_{n\ge1}\cor_{K_n/K}\bigl(S_p^\pm(E/K_n)\bigr)\subset S_p^\pm(E/K) .
\]
The following theorem is the counterpart for plus/minus Selmer groups of \cite[Theorem 6.1]{Ber2}.

\begin{theorem} \label{perfect-thm}
There is a perfect pairing  
\[
\langle\!\langle\cdot,\cdot\rangle\!\rangle:S^\pm_p(E/K)\big/US^\pm_p(E/K)\times \Sel^\pm_{p^\infty}(E/K_\infty)_{G_\infty}\longrightarrow G_\infty\otimes_{\Zp}\Qp/\Zp.
\]
\end{theorem}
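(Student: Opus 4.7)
The plan is to assemble the finite-level perfect pairings from Proposition \ref{prop:tatepairing} into the limit pairing by passing to the limit in $m$ and $n$. Concretely, for $x\in S_p^\pm(E/K)$ and $y\in\Sel_{p^\infty}^\pm(E/K_\infty)$, first choose $n$ large enough so that $y\in\Sel_{p^\infty}^\pm(E/K_\infty)^{\cG_{\infty/n}}$, which equals $\Sel_{p^\infty}^\pm(E/K_n)$ by the control theorem (Proposition \ref{prop:control}), and $m$ such that $y\in\Sel_{p^m}^\pm(E/K_n)$. Define
\[
\langle\!\langle x, y\rangle\!\rangle := \iota_{m,n}\bigl(\langle x\bmod p^m, y\rangle_{m,n}\bigr),
\]
where $\iota_{m,n}\colon I_{m,n}/I_{m,n}^2\simeq G_n\otimes_{\Zp}\ZZ/p^m\ZZ\longrightarrow G_\infty\otimes_{\Zp}\Qp/\Zp$ is the natural map induced by the projection $G_\infty\twoheadrightarrow G_n$ and the inclusion $\ZZ/p^m\ZZ\hookrightarrow\Qp/\Zp$.

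Well-definedness requires compatibility of the pairings $\langle\cdot,\cdot\rangle_{m,n}$ under the transition maps. Compatibility in $m$ follows from naturality of the Tate pairing and of the local duality used in Proposition \ref{prop:tatepairing} with respect to the inclusion $A_m\hookrightarrow A_{m'}$. Compatibility in $n$ requires the identification, via the control theorem, of $\Sel_{p^m}^\pm(E/K_n)$ with $\Sel_{p^m}^\pm(E/K_{n'})^{\cG_{n'/n}}$, and the observation that corestriction on one side and restriction on the other induce the expected transition maps on the Tate cohomology groups and on the augmentation quotients $I_{m,n}/I_{m,n}^2$. One then checks that $\langle\!\langle\cdot,\cdot\rangle\!\rangle$ vanishes on $US_p^\pm(E/K)$ in the first variable --- elements in the image of $\cor_{K_n/K}$ become norms $N_{G_n}$, which pair trivially in Tate cohomology --- and on $(\gamma_\infty-1)\Sel^\pm_{p^\infty}(E/K_\infty)$ in the second, since such elements are boundaries in $\hat H^{-1}$ at every sufficiently large finite layer.

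To conclude perfectness, observe that $S_p^\pm(E/K)/US_p^\pm(E/K)$ is a compact $\Zp$-module (an inverse limit of finite groups), $\Sel^\pm_{p^\infty}(E/K_\infty)_{G_\infty}$ is a discrete $p$-primary torsion module, and the target $G_\infty\otimes_{\Zp}\Qp/\Zp\simeq\Qp/\Zp$ realises Pontryagin duality between the two. Since the finite-level pairings are already perfect and the $\Sel_{p^m}^\pm(E/K_n)$ are finite (so Mittag-Leffler is automatic), no exactness is lost in passing to the limits, and non-degeneracy on both sides follows.

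The main obstacle will be the compatibility checks in the second paragraph. The delicate step is that an admissible set $\Sigma$ for $(E,K_n,p^m)$ must be replaced by one that also works for $(E,K_{n'},p^{m'})$, and one has to verify that the pairings constructed with different admissible sets agree on the classes that will survive to the limit. This is where the plus/minus hypotheses enter in an essential way through the self-duality of Proposition \ref{prop:kim}(b), used in the very construction of $\langle\cdot,\cdot\rangle_{m,n}$; the rest of the argument then follows Bertolini's strategy for \cite[Theorem 6.1]{Ber2}, with the ordinary-case ingredients replaced throughout by their plus/minus analogues developed earlier in the paper.
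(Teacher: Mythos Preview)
Your proposal is correct and follows essentially the same approach as the paper: the paper's proof simply says to invoke Proposition~\ref{prop:tatepairing} and then argue as in \cite[Theorem~6.1]{Ber2}, replacing the ordinary control theorem by Proposition~\ref{prop:control}, and your sketch is precisely an elaboration of what that argument entails. Your concern about varying admissible sets is legitimate but already handled in Bertolini's original argument (the pairing $\langle\cdot,\cdot\rangle_{m,n}$ is shown to be independent of the auxiliary choices), so no new idea is needed beyond the plus/minus substitutions you indicate.
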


\begin{proof}
Using Proposition \ref{prop:tatepairing}, this follows as in the proof of \cite[Theorem~6.1]{Ber2}, where we replace the control theorem used to prove \cite[Lemma~6.4]{Ber2} with the plus/minus analogue provided by Proposition~\ref{prop:control}.
\end{proof}

This in turn gives the following

\begin{corollary}\label{cor:key}
The $\Lambda$-module $\Sel_{p^\infty}^\pm(E/K_\infty)$ admits no proper $\Lambda$-submodule of finite index if and only if $S_p^\pm(E/K)\big/US_p^\pm(E/K)$ has no $\Zp$-torsion.
\end{corollary}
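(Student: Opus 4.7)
The plan is to deduce this corollary by combining the perfect pairing of Theorem~\ref{perfect-thm} with two standard module-theoretic equivalences, one on each side of the pairing.

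First, I would reduce the condition on $\Sel_{p^\infty}^\pm(E/K_\infty)$ to a statement about $G_\infty$-coinvariants. Specifically, I would prove the following general lemma: for a cofinitely generated discrete $\Lambda$-module $M$, having no proper $\Lambda$-submodule of finite index is equivalent to $M_{G_\infty}=M/(\gamma_\infty-1)M$ being $\Zp$-divisible. For the forward direction, if $M_{G_\infty}$ has a nontrivial finite $\Zp$-quotient (which happens precisely when it fails to be divisible, as it is cofinitely generated over $\Zp$), then since $\Lambda$ acts on that quotient through its trivial augmentation one obtains a surjection of $\Lambda$-modules $M\twoheadrightarrow\Z/p\Z$, whose kernel is a proper $\Lambda$-submodule of index $p$. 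For the converse, given a proper $\Lambda$-submodule $N\subsetneq M$ of finite (hence $p$-power) index, the pro-$p$ group $G_\infty\simeq\Zp$ acts on the finite $p$-primary module $M/N$ through a finite cyclic $p$-quotient, so $(M/N)_{G_\infty}\neq 0$; being a nonzero finite quotient of $M_{G_\infty}$, this obstructs divisibility.

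Next, I would invoke the perfect pairing of Theorem~\ref{perfect-thm}. Fixing the topological generator $\gamma_\infty$ identifies $G_\infty\otimes_{\Zp}\Qp/\Zp$ with $\Qp/\Zp$, so the pairing exhibits $S_p^\pm(E/K)/US_p^\pm(E/K)$ and $\Sel_{p^\infty}^\pm(E/K_\infty)_{G_\infty}$ as $\Zp$-Pontryagin duals of one another. The remaining ingredient is the elementary observation that for a cofinitely generated discrete $\Zp$-module $A\cong(\Qp/\Zp)^r\oplus F$ with $F$ finite, the Pontryagin dual $A^\vee\cong\Zp^r\oplus F$ is $\Zp$-torsion-free if and only if $F=0$, i.e., if and only if $A$ is divisible.

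Assembling these three pieces, the chain of equivalences reads: $\Sel_{p^\infty}^\pm(E/K_\infty)$ has no proper $\Lambda$-submodule of finite index $\Longleftrightarrow$ $\Sel_{p^\infty}^\pm(E/K_\infty)_{G_\infty}$ is $\Zp$-divisible $\Longleftrightarrow$ its Pontryagin dual $S_p^\pm(E/K)/US_p^\pm(E/K)$ has no $\Zp$-torsion. I do not anticipate a serious obstacle, since the hard analytic and Galois-cohomological input is already encoded in Theorem~\ref{perfect-thm}; the only subtlety to watch is confirming that $\Sel_{p^\infty}^\pm(E/K_\infty)_{G_\infty}$ is genuinely cofinitely generated over $\Zp$ (so that the ``divisible plus finite'' decomposition applies), which follows from the control theorem of Proposition~\ref{prop:control} together with the cofinite generation of $\Sel_{p^\infty}^\pm(E/K_\infty)$ as a $\Lambda$-module.
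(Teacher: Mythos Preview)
Your proposal is correct and essentially reconstructs the purely algebraic argument of \cite[Corollary~6.2]{Ber2} that the paper invokes verbatim (the paper gives no further details beyond citing Bertolini). One cosmetic remark: the cofinite generation of $\Sel_{p^\infty}^\pm(E/K_\infty)_{G_\infty}$ over $\Zp$ follows directly from cofinite generation over $\Lambda$, since the dual $(M^\vee)^{G_\infty}$ is annihilated by $\gamma_\infty-1$ and hence is a finitely generated $\Lambda/(\gamma_\infty-1)\simeq\Zp$-module---so the appeal to the control theorem (which concerns $G_\infty$-invariants rather than coinvariants) is unnecessary there.
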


\begin{proof}
Using Theorem \ref{perfect-thm} in place of \cite[Theorem 6.1]{Ber2}, one can proceed exactly as in the proof of \cite[Corollary~6.2]{Ber2}, which is completely algebraic in nature and does not depend on the definition of the Selmer groups involved.
\end{proof}

\subsection{Computation of universal norms}\label{sec:computation-of-universal-norms}

Under appropriate hypotheses, we now compute the universal norm submodule $US_p^\pm(E/K)$, thus extending \cite[Theorem~7.1]{Ber2} to our setting. In doing so, we follow \cite[\S7]{Ber2} closely. We begin with a generalization of \cite[Lemma~9]{bertolini-lincei}. Recall that $R_n=R_{1,n}=(\ZZ/p\ZZ)[G_n]$ and $\fE^\pm_n=\fE^\pm_{1,n}$.

\begin{lemma} \label{lem:free}
If $\fE_n^\pm\ne0$, then $\Sel_p^\pm(E/K_n)$ admits  a free $R_n$-submodule $U_n^\pm$ such that $\fE_n^\pm\subset U_n^\pm$. 
\end{lemma}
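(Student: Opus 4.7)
The plan is to take $U_n^\pm := \fE_n^\pm$ and to show that, under the hypothesis $\fE_n^\pm\ne 0$, this cyclic $R_n$-module is in fact free of rank one over $R_n$, which makes the desired containment $\fE_n^\pm\subseteq U_n^\pm$ tautological.

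First, I would assemble two structural ingredients. Corollary~\ref{cor:cyclic} gives $\fE_n^\pm\cong R_n/J$ for some ideal $J\subseteq R_n$. Since $R_n\cong\F_p[G_n]\cong\F_p[T]/(T^{p^n})$ is a local Artinian Frobenius algebra whose maximal ideal $I_{1,n}$ is principal and nilpotent of index $p^n$, this ideal $J$ must be of the form $I_{1,n}^k$ for some $0\le k\le p^n$, with $\dim_{\F_p}(R_n/I_{1,n}^k)=k$. Hence it suffices to prove the equality $\dim_{\F_p}\fE_n^\pm=p^n$, which then forces $J=0$.

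Next, I would compute this dimension via Pontryagin duality together with Proposition~\ref{free-prop}, which identifies $\cH_\infty^\pm$ with $\Lambda$ as $\Lambda$-modules. Pontryagin duality swaps $G_{\infty/n}$-invariants with $G_{\infty/n}$-coinvariants and $p$-torsion with reduction modulo $p$, so
\[
\bigl(\fE_n^\pm\bigr)^\vee = \bigl((\cE_\infty^\pm)^{G_{\infty/n}}[p]\bigr)^\vee \cong (\cH_\infty^\pm)_{G_{\infty/n}}\big/p \cong \Lambda/(\omega_n,p) = R_n,
\]
where $\omega_n=\gamma_\infty^{p^n}-1$ generates the kernel of the natural surjection $\Lambda\twoheadrightarrow\Z_p[G_n]$. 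Since $\fE_n^\pm$ is a finite $\F_p$-vector space, its Pontryagin dual over $\Z_p$ has the same $\F_p$-dimension as itself, so $\dim_{\F_p}\fE_n^\pm=\dim_{\F_p}R_n=p^n$. Combined with the previous paragraph, this forces $\fE_n^\pm\cong R_n$ to be free of rank one, and $U_n^\pm:=\fE_n^\pm$ is the desired submodule.

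The main delicate point is the Pontryagin-duality computation above: one must verify carefully that the identification $\cH_\infty^\pm\cong\Lambda$ from Proposition~\ref{free-prop} is compatible with the $\Lambda$-action used to define $\fE_n^\pm$, and that the standard swaps of ``invariants $\leftrightarrow$ coinvariants'' and of ``$p$-torsion $\leftrightarrow$ reduction modulo $p$'' apply cleanly in the discrete/compact setting at hand. Once this bookkeeping is in order, the dimension count finishes the proof; as a bonus, the argument shows that the hypothesis $\fE_n^\pm\ne 0$ is in fact automatic from Proposition~\ref{free-prop}.
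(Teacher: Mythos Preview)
Your argument is correct, and it is genuinely simpler than the paper's. The paper does not take $U_n^\pm=\fE_n^\pm$; instead it follows Bertolini's original strategy from \cite{bertolini-lincei}. Writing $t_m^\pm:=p^m-\dim_{\F_p}\fE_m^\pm$, the paper uses Lemma~\ref{lem:embeds} to show $t_{n+1}^\pm=t_n^\pm=:t^\pm$, and then defines
\[
U_n^\pm:=(\gamma_{n+1}-1)^{p^{n+1}-p^n-t^\pm}\,\fE_{n+1}^\pm,
\]
checking that this is a free rank-one $R_n$-module sitting between $\fE_n^\pm$ and $\Sel_p^\pm(E/K_n)$. Your duality computation
\[
(\fE_n^\pm)^\vee\;\cong\;(\cH_\infty^\pm)_{\cG_{\infty/n}}/p\;\cong\;\Lambda/(\omega_n,p)\;=\;R_n
\]
exploits the full strength of Proposition~\ref{free-prop} and shows directly that $t^\pm=0$, so that $\fE_n^\pm$ is itself free of rank one; in the paper's construction this collapses to $U_n^\pm=(\fE_{n+1}^\pm)^{\cG_{n+1/n}}=\fE_n^\pm$, agreeing with your choice. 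The paper's argument only uses that $\fE_n^\pm$ is cyclic (Corollary~\ref{cor:cyclic}) and would survive if one merely knew $\cH_\infty^\pm$ is torsion-free of rank one without the cyclicity that makes Proposition~\ref{free-prop} work; but since both proofs ultimately invoke Proposition~\ref{free-prop} anyway, your route is the more efficient one, and your observation that the hypothesis $\fE_n^\pm\ne 0$ becomes automatic is a genuine strengthening.
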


\begin{proof}
For $m\in\{n,n+1\}$, fix a generator $\gamma_{m}$ of $G_m$. Recall from Corollary~\ref{cor:cyclic} that $\fE_m^\pm$ is a cyclic  $R_m$-module. Consequently, \cite[Lemma~3]{bertolini-lincei} says that 
\begin{equation}
\fE_m^\pm\simeq R_m/(\gamma_m-1)^{p^m-t_m^\pm}\simeq (\gamma_m-1)^{t_m^\pm}R_m,
    \label{eq:cyclic}
\end{equation}
where $t_m^\pm=p^m-\dim_{\Fp}(\fE_m^\pm)$. Note that $\cG_{n+1/n}=\langle \gamma_{n+1}^{p^n}\rangle$, which is cyclic of order $p$. Then
\begin{align*}
\bigl((\gamma_{n+1}-1)^{s}R_{n+1}\bigr)^\cG&=\Big(1+\gamma_{n+1}^{p^n}+\cdots +\gamma_{n+1}^{p^n(p-1)}\Big)(\gamma_{n+1}-1)^{s}R_{n+1}\\
&=(\gamma_{n+1}-1)^{p^{n+1}-p^n+s}R_{n+1}
\end{align*}
for all $0\le s\le p^{n+1}$. In particular, isomorphism \eqref{eq:cyclic} tells us that
\[
\dim_{\Fp}\bigl((\gamma_{n+1}-1)^{s}R_{n+1}\bigr)^\cG=p^n-s.
\]
Recall from Lemma \ref{lem:embeds} that
\[
\bigl(\fE_{n+1}^\pm\bigr)^{\cG_{n+1/n}}=\fE_n^\pm.
\]
Since $\dim_{\Fp}\fE_{n+1}^\pm=p^{n+1}-t^\pm_{n+1}$, we have 
\[
\dim_{\Fp}\bigl(\fE_{n+1}^\pm\bigr)^{\cG}=p^n-t^\pm_{n+1}=p^n-t^\pm_n,
\]
so $t^\pm_{n+1}=t^\pm_n$. Let us write $t^\pm$ for this common value and define the cyclic $R_{n+1}$-module
\[
U_n^\pm:=(\gamma_{n+1}-1)^{p^{n+1}-p^n-t^\pm}\fE_{n+1}^\pm\simeq(\gamma_{n+1}-1)^{p^{n+1}-p^n}R_{n+1}.
\]
Note that $U_n^\pm$ is contained in $\fE_{n+1}^\pm$ by definition and is isomorphic to $R_{n+1}^{\cG_{n+1/n}}$. Therefore $U_n^\pm$ is invariant under $\cG_{n+1/n}$, which makes it an $R_n$-module. On the one hand, $U_n^\pm$ contains $(\fE_{n+1}^\pm)^{\cG_{n+1/n}}=\fE_n^\pm$; on the other hand, $U^\pm_n$ is contained in $\Sel_p^\pm(E/K_{n+1})^{\cG_{n+1/n}}\simeq \Sel_p^\pm(E/K_n)$. Finally, we  deduce from \eqref{eq:cyclic} that $\dim_{\Fp}U_n^\pm=p^n$, hence $U_n^\pm$ is free of rank one over $R_n$.
\end{proof}

Now we introduce plus and minus analogues of Shafarevich--Tate groups over finite layers of $K_\infty/K$ and, following \cite[\S7]{Ber2}, relative versions of them as well. 

\begin{definition}
The \emph{plus and minus $p^m$-Shafarevich--Tate groups} of $E$ over $K_n$ are
\[
\sha_{p^m}^\pm(E/K_n):=\Sel_{p^m}^\pm(E/K_n)\big/\HH^\pm_n[p^m].
\]
The \emph{relative plus and minus $p^m$-Shafarevich--Tate groups} are
\[
\sha_{p^m}^\pm(E,K_{n+1}/K_n):=\ker\Big(\sha_{p^m}^\pm(E/K_n)\longrightarrow \sha_{p^m}^\pm(E/K_{n+1})\Big),
\]
where the map on the right is induced by restriction.
\end{definition}

Let $U_n^\pm$ be the free $R_n$-submodule of $\Sel_p^\pm(E/K_n)$ from Lemma \ref{lem:free}, and recall the labels that were assigned to sets of hypotheses in Section \ref{sec:hypotheses}.

\begin{lemma}\label{lem:Un}
Suppose that $\sha^\pm_p(E,K_{n+1}/K_n)=0$ and $\fE^\pm_n\ne0$. Then $U_n^\pm\subset\HH_n^\pm[p]$.
\end{lemma}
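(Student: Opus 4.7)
The plan is to combine the construction of $U_n^\pm$ from Lemma~\ref{lem:free} with the hypothesis $\sha^\pm_p(E,K_{n+1}/K_n)=0$, by showing that every $y\in U_n^\pm\subset\Sel_p^\pm(E/K_n)$ becomes trivial in $\sha_p^\pm(E/K_{n+1})$ and therefore already lies in $\HH_n^\pm[p]$.

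First I would unpack the identification made in the statement of Lemma~\ref{lem:free}. By construction, $U_n^\pm=(\gamma_{n+1}-1)^{p^{n+1}-p^n-t^\pm}\fE_{n+1}^\pm$ lives inside $\fE_{n+1}^\pm\subset\Sel_p^\pm(E/K_{n+1})$, and it is fixed by $\cG_{n+1/n}$. Via the control theorem (Proposition~\ref{prop:control}) applied with $m=m'=1$ and $n'=n+1$, the restriction map identifies $\Sel_p^\pm(E/K_n)$ with $\Sel_p^\pm(E/K_{n+1})^{\cG_{n+1/n}}$; this is precisely the identification under which $U_n^\pm$ is viewed as a submodule of $\Sel_p^\pm(E/K_n)$. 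In what follows I will distinguish between an element $y\in\Sel_p^\pm(E/K_n)$ and its restriction $\res(y)\in\Sel_p^\pm(E/K_{n+1})$.

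Next I would observe that the restriction map on Selmer groups is compatible with the filtrations $\HH_n^\pm[p]\subset\Sel_p^\pm(E/K_n)$, since $\HH_n^\pm\subset\HH_{n+1}^\pm$ as submodules of $\HH_\infty^\pm$ (invariants under the smaller group $\Gal(K_\infty/K_{n+1})$ contain those under $\Gal(K_\infty/K_n)$). Consequently restriction induces a well-defined map
\[
\sha_p^\pm(E/K_n)\longrightarrow\sha_p^\pm(E/K_{n+1}),
\]
whose kernel is, by definition, $\sha_p^\pm(E,K_{n+1}/K_n)$.

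Now fix $y\in U_n^\pm\subset\Sel_p^\pm(E/K_n)$ and set $x:=\res(y)$. Under the control-theorem identification, $x$ is the element of $\fE_{n+1}^\pm\subset\Sel_p^\pm(E/K_{n+1})$ from which $y$ was produced. Since $\fE_{n+1}^\pm\subset\HH_{n+1}^\pm[p]$ (this is the inclusion discussed just before Lemma~\ref{lem:Hembeds}), the class of $x$ in $\sha_p^\pm(E/K_{n+1})$ vanishes. Therefore the class $[y]\in\sha_p^\pm(E/K_n)$ belongs to $\sha_p^\pm(E,K_{n+1}/K_n)$, which is zero by hypothesis. Hence $y\in\HH_n^\pm[p]$, proving $U_n^\pm\subset\HH_n^\pm[p]$.

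The only real subtlety is bookkeeping: one must be careful to distinguish $U_n^\pm$ as a subgroup of $\Sel_p^\pm(E/K_{n+1})$ (where it is visibly contained in $\HH_{n+1}^\pm[p]$) from its image in $\Sel_p^\pm(E/K_n)$ under the inverse of the control-theorem isomorphism (where the conclusion $U_n^\pm\subset\HH_n^\pm[p]$ is what we want). Once this is set up cleanly, the rest of the argument is a one-line diagram chase using the definition of the relative Shafarevich--Tate group.
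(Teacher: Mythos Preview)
Your argument is correct and follows the same approach as the paper. The paper packages the diagram chase slightly differently---it applies the snake lemma to the commutative diagram with rows $0\to\HH_\bullet^\pm[p]\to\Sel_p^\pm(E/K_\bullet)\to\sha_p^\pm(E/K_\bullet)\to0$ (at levels $n$ and $n+1$, the latter after taking $\cG_{n+1/n}$-invariants) to deduce the equality $\HH_n^\pm[p]=(\HH_{n+1}^\pm[p])^{\cG_{n+1/n}}$, then observes that $U_n^\pm$ lies in the right-hand side by construction---but this is exactly the same content as your direct element chase.
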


\begin{proof}
Consider the commutative diagram
\[
\xymatrix@C=20pt{
0\ar[r]&\HH_{n}^\pm[p]\ar[r]\ar[d]&\Sel_p^\pm(E/K_n)\ar[r]\ar[d]^{\simeq}&\sha^\pm_p(E/K_n)\ar[r]\ar[d]&0\\
0\ar[r]&\left(\HH_{n+1}^\pm[p]\right)^{\cG_{n+1/n}}\ar[r]&\Sel_p^\pm(E/K_{n+1})^{\cG_{n+1/n}}\ar[r]&\sha_p^\pm(E/K_{n+1})^{\cG_{n+1/n}}\ar[r]&0
}
\]
in which the middle vertical isomorphism comes from Proposition~\ref{prop:control}. By applying the snake lemma, one checks that
\begin{equation} \label{H-eq}
\HH_n^\pm[p]=\left(\HH_{n+1}^\pm[p]\right)^{\cG_{n+1/n}}.
\end{equation}
By construction, $U_n^\pm$ is contained in the module on the right-hand side of \eqref{H-eq}, and the result follows.
\end{proof}

\begin{lemma} \label{lem:noSha}
There is an equality 
\[
S_p^\pm (E/K)=\varprojlim_m \HH_0^\pm[p^m].
\]
\end{lemma}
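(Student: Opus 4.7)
The plan is to combine Lemma~\ref{lem:same} with the classical Kummer sequence and identify both sides of the asserted equality with the Tate module of the maximal $p$-divisible subgroup of $\Sel_{p^\infty}(E/K)$. First I would apply Lemma~\ref{lem:same} to rewrite
\[
S_p^\pm(E/K) = \varprojlim_m \Sel_{p^m}^\pm(E/K) = \varprojlim_m \Sel_{p^m}(E/K),
\]
and then invoke the classical Kummer short exact sequence
\[
0 \longrightarrow E(K)/p^m E(K) \longrightarrow \Sel_{p^m}(E/K) \longrightarrow \sha(E/K)[p^m] \longrightarrow 0
\]
at each level $m$. Since $E(K)/p^m$ is finite by Mordell--Weil, Mittag--Leffler applies, and passing to the inverse limit along the multiplication-by-$p$ transition maps $A_{m+1} \to A_m$ (whose interaction with the Kummer map is the usual reduction on the left and multiplication by $p$ on the right) gives an exact sequence
\[
0 \longrightarrow E(K) \otimes_\Z \Zp \longrightarrow S_p^\pm(E/K) \longrightarrow T_p\bigl(\sha(E/K)[p^\infty]\bigr) \longrightarrow 0,
\]
in which $T_p$ is computed with respect to multiplication by $p$, so that the $p$-cotorsion (non-divisible) part of $\sha(E/K)[p^\infty]$ is annihilated and only its maximal divisible part survives (hence the label \textbf{noSha}).

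Next I would analyze $\HH_0^\pm$. Since $E(K) \subseteq E^\pm(K_n)$ for every $n \geq 0$ (the trace conditions in Definition~\ref{plus-minus-def} are vacuous on $K$-rational points), the Kummer map yields $\HH_0^\pm \supseteq E(K) \otimes \Qp/\Zp$, and one has $\HH_0^\pm \subseteq \Sel_{p^\infty}^\pm(E/K) = \Sel_{p^\infty}(E/K)$ by Lemma~\ref{lem:same} and Proposition~\ref{prop:control}. I would then show, via an inflation-restriction argument in the spirit of Lemmas~\ref{lem:inj-p} and \ref{lem:embeds}, combined with the divisibility of $\HH^\pm_\infty$ as a union of the divisible groups $E^\pm(K_n) \otimes \Qp/\Zp$, that $\HH_0^\pm$ in fact exhausts the maximal $p$-divisible subgroup of $\Sel_{p^\infty}(E/K)$.

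Finally, passing to the $p$-adic Tate module and using that $T_p$ kills finite $p$-groups, the finite cotorsion contributions on both sides vanish, yielding
\[
\varprojlim_m \HH_0^\pm[p^m] = T_p\, \HH_0^\pm = T_p\bigl(\Sel_{p^\infty}(E/K)_{\mathrm{div}}\bigr) = S_p^\pm(E/K),
\]
which is the desired equality. The main obstacle I expect is the identification of $\HH_0^\pm$ with the full maximal divisible subgroup of $\Sel_{p^\infty}(E/K)$: namely, lifting any $p$-divisible Selmer class at the ground level to a $\Gal(K_\infty/K)$-invariant element of $\HH^\pm_\infty$. This requires a careful layer-by-layer analysis using the plus/minus trace relations together with the control theorem, and is where the interplay between the Kobayashi-type local structure and the anticyclotomic Galois action enters most critically.
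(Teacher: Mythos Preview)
Your setup via Lemma~\ref{lem:same} and the Kummer exact sequence is sound, and the exact sequence
\[
0 \longrightarrow E(K)\otimes_\Z\Zp \longrightarrow S_p^\pm(E/K) \longrightarrow T_p\bigl(\sha(E/K)[p^\infty]\bigr) \longrightarrow 0
\]
is correct. The genuine gap is exactly where you flag it: the identification of $\HH_0^\pm$ with the maximal $p$-divisible subgroup of $\Sel_{p^\infty}(E/K)$. By construction every element of $\HH_\infty^\pm$ arises, via the Kummer map, from Mordell--Weil points in some $E^\pm(K_n)$, hence has trivial image in $\sha(E/K_n)[p^\infty]$. Passing to $G_\infty$-invariants does not change this, so $\HH_0^\pm$ can never detect a Selmer class over $K$ whose image in $\sha(E/K)[p^\infty]$ is a nonzero divisible element. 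Consequently the inclusion $\HH_0^\pm \subseteq \Sel_{p^\infty}(E/K)_{\mathrm{div}}$ is strict whenever $\sha(E/K)[p^\infty]_{\mathrm{div}} \neq 0$, and no inflation--restriction or plus/minus trace argument can close that gap: what you are trying to prove is \emph{equivalent} to the vanishing of the divisible part of $\sha$.

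The paper's proof bypasses all of this by invoking that vanishing directly. Under \textbf{(Heeg)}, Kolyvagin's theorem \cite[Theorem~A]{Kol-Euler} gives that $\sha(E/K)[p^\infty]$ is finite, so $\sha_{p^m}(E/K)$ is bounded independently of $m$ and $\varprojlim_m \sha_{p^m}(E/K)=0$. Combined with the identification $\HH_0^\pm[p^m]=E(K)/p^mE(K)$ (which the paper draws from Lemma~\ref{lem:same}), the short exact sequence
\[
0\longrightarrow\HH_0^\pm[p^m]\longrightarrow\Sel_{p^m}^\pm(E/K)\longrightarrow\sha_{p^m}(E/K)\longrightarrow0
\]
collapses in the limit to the desired equality. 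Once you add Kolyvagin's input, your detour through maximal divisible subgroups becomes unnecessary; without it, the argument does not close.
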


\begin{proof}
By Lemma~\ref{lem:same}, $\HH_0^\pm[p^m]=E(K)/p^mE(K)$ and $\Sel_{p^m}^\pm(E/K)=\Sel_{p^m}(E/K)$, therefore there is a short exact sequence
\begin{equation} \label{sel-sha-eq}
0\longrightarrow\HH_0^\pm[p^m]\longrightarrow\Sel_{p^m}^\pm(E/K)\longrightarrow\sha_{p^m}(E/K)\longrightarrow0.
\end{equation}
But $\sha_{p^m}(E/K)$ is finite and bounded independently of $m$ (\cite[Theorem A]{Kol-Euler}), hence the result follows upon taking inverse limits in \eqref{sel-sha-eq}.
\end{proof}

Recall from $\S$\ref{sec:hypotheses} the following hypothesis.
\begin{itemize}
\item[\textbf{(mod $p$)}] For an elliptic curve $E/\Q$ satisfying both $a_p(E)=0$ and $\textbf{(Norm})$, we have  $\sha^\pm_p(E,K_{n+1}/K_n)=0$ and $\fE^\pm_n\ne0$ for all $n \in \N$.  
\end{itemize}

\begin{remark} 
The condition $\fE^\pm_n\ne0$ has been established in some cases which overlap with our own; see \cite[Theorem 4.6]{burungale}.
\end{remark}

The following theorem is the main result of this article.

\begin{theorem} \label{thm:main-no-finite-index}
Suppose that hypothesis \textbf{(mod $p$)} is satisfied. Then $US_p^\pm(E/K)$ is free of rank one over $\Zp$ and the $\Lambda$-module $\Sel_{p^\infty}^\pm(E/K_\infty)$ admits no proper $\Lambda$-submodule of finite index.
\end{theorem}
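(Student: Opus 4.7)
The plan is to invoke Corollary \ref{cor:key}, which reduces the statement to proving that $S_p^\pm(E/K)\big/US_p^\pm(E/K)$ has no $\Zp$-torsion; the claim that $US_p^\pm(E/K)$ is free of rank one over $\Zp$ will emerge as a byproduct of the construction. A first preliminary observation is that, by Lemma \ref{lem:noSha}, $S_p^\pm(E/K)=\varprojlim_m \HH_0^\pm[p^m]$, and since $(\HH_\p^\pm)^\vee$ is free of rank one over $\Lambda$ by Proposition \ref{prop:kim}(a), the $\Zp$-module $S_p^\pm(E/K)$ has rank at most one. Hence it will suffice to produce a single nonzero universal norm and to check that the quotient has no $p$-torsion.

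The construction of universal norms proceeds layer by layer. Hypothesis \textbf{(mod $p$)} guarantees both halves of the hypothesis of Lemma \ref{lem:Un}, so for every $n\ge 0$ the lemmas \ref{lem:free} and \ref{lem:Un} produce a free cyclic $R_n$-submodule $U_n^\pm$ of $\HH_n^\pm[p]\subset \Sel_p^\pm(E/K_n)$ containing $\fE_n^\pm$. Using the containment $(\fE_{n+1}^\pm)^{\cG_{n+1/n}}=\fE_n^\pm$ from Lemma \ref{lem:embeds} and the explicit description of $U_n^\pm$ as $(\gamma_{n+1}-1)^{p^{n+1}-p^n-t^\pm}\fE_{n+1}^\pm$ given in the proof of Lemma \ref{lem:free}, one verifies that the generators of the $U_n^\pm$ can be chosen compatibly, so that $\cor_{K_{n+1}/K_n}$ sends a generator to a generator (up to a unit). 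This yields a nontrivial compatible sequence in $\varprojlim_n\HH_n^\pm[p]$ whose image in $\HH_0^\pm[p]$ is a nonzero mod-$p$ universal norm. Lifting from $p$-torsion to $p^m$-torsion is then carried out via the freeness in Proposition \ref{prop:kim}(a), which lets one realize $\HH_n^\pm[p^m]$ as the $p^m$-torsion of a compatible system of free $\Lambda$-quotients; this produces a $\Zp$-free rank-one submodule of $US_p^\pm(E/K)$.

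The remaining and most delicate step is the $\Zp$-torsion-freeness of $S_p^\pm(E/K)\big/US_p^\pm(E/K)$. Suppose $y\in S_p^\pm(E/K)$ satisfies $py\in US_p^\pm(E/K)$; we must show $y$ itself is a universal norm. The idea is to use the identification $\HH_n^\pm[p]=(\HH_{n+1}^\pm[p])^{\cG_{n+1/n}}$ obtained by the snake-lemma argument in the proof of Lemma \ref{lem:Un}, together with the control theorem (Proposition \ref{prop:control}), to lift the image of $y$ in $\HH_0^\pm[p]$ to a compatible family at every finite level. Concretely, if $py$ comes from a compatible family $(x_n)$ with $x_n\in U_n^\pm\subset\HH_n^\pm[p^m]$, then the cyclicity and freeness of $U_n^\pm$ over $R_n$ force the existence of a ``$p$-th root'' $x_n'$ of $x_n$ inside $\HH_n^\pm[p^{m+1}]$ that is still compatible under corestriction; passing to the inverse limit in $m$ and $n$ produces the required preimage of $y$ in $\hat S_p^\pm(E/K_\infty)$.

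The main obstacle I anticipate is precisely this last step, namely the simultaneous compatibility of the Kummer lifts (vertical, in $m$) and the corestriction maps (horizontal, in $n$): one has to rule out any obstruction that would be analogous to the ordinary-case phenomenon treated by Bertolini via the vanishing of $\sha^\pm_p(E,K_{n+1}/K_n)$, which in our setup is encoded into hypothesis \textbf{(mod $p$)}. Once this is established, $US_p^\pm(E/K)$ is a $\Zp$-torsion-free rank-one submodule of the rank-one $\Zp$-module $S_p^\pm(E/K)/(\text{torsion})$, hence free of rank one, and Corollary \ref{cor:key} yields the absence of proper $\Lambda$-submodules of finite index in $\Sel_{p^\infty}^\pm(E/K_\infty)$.
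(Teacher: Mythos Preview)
Your overall architecture---reduce to Corollary~\ref{cor:key}, then manufacture a universal norm out of the free modules $U_n^\pm$---is the right one, but two genuine gaps would keep the argument from closing.

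First, your bound $\rank_{\Z_p}S_p^\pm(E/K)\le 1$ is not justified: Proposition~\ref{prop:kim}(a) concerns the \emph{local} module $(\HH_\p^\pm)^\vee$, not the global $\HH_0^\pm$. By Lemmas~\ref{lem:same} and~\ref{lem:noSha} one has $S_p^\pm(E/K)\simeq E(K)\otimes_\Z\Z_p$, whose rank is not controlled by the local condition. The paper instead uses two external inputs you omit: (i) $E(K)[p]=0$ (supersingular reduction, $p$ unramified in $K$), so $S_p^\pm(E/K)$ is a \emph{free} $\Z_p$-module; and (ii) $\Sel_{p^\infty}^\pm(E/K_\infty)^\vee$ has $\Lambda$-rank one (\cite[Theorem 1.4]{LV-BUMI}), which combined with Proposition~\ref{prop:PR} and the argument of \cite[\S3.2]{Ber1} gives $\rank_{\Z_p}US_p^\pm(E/K)=1$. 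With (i) and (ii) in hand, the ``most delicate step'' you sketch becomes unnecessary: once you exhibit a single element of $US_p^\pm(E/K)$ not divisible by $p$ in $S_p^\pm(E/K)$, that element generates $US_p^\pm(E/K)\simeq\Z_p$ and is a direct summand of the free $\Z_p$-module $S_p^\pm(E/K)$, so the quotient is automatically torsion-free.

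Second, your construction of a universal norm via ``compatibly chosen generators'' of the $U_n^\pm$ under $\cor_{K_{n+1}/K_n}$ is asserted but not established; the description of $U_n^\pm$ in the proof of Lemma~\ref{lem:free} does not obviously give this compatibility, and the $U_n^\pm$ live only mod $p$. The paper avoids this entirely: for each $n$ it lifts $U_n^\pm\subset\HH_n^\pm[p]$ to a free rank-one $\Z_p[G_n]$-submodule $\tilde U_n^\pm\subset T\HH_n^\pm:=\varprojlim_m\HH_n^\pm[p^m]$, generated by some $v_n$. Freeness forces $\cor_{K_n/K}(v_n)\in S_p^\pm(E/K)$ to be nonzero mod $p$. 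No compatibility is needed: by compactness of $S_p^\pm(E/K)$ one extracts a convergent subsequence of $\bigl(\cor_{K_{n_i}/K}(v_{n_i})\bigr)$, and the limit lies in $US_p^\pm(E/K)$ and is not divisible by $p$.
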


\begin{proof}
We first observe that $E(K)$ is $p$-torsion free; this is a consequence of our hypotheses that $E$ has good supersingular reduction at primes above $p$ and that $p$ is unramified in $K$. Thus it follows from the proof of Lemma~\ref{lem:noSha} that $S_p^\pm(E/K)\simeq E(K)\otimes_\Z\Z_p$ is a free $\Zp$-module of finite rank. Furthermore, by \cite[Theorem~1.4]{LV-BUMI} (see also \cite[Theorem~A]{CastellaWan}), the $\Lambda$-module $\Sel_{p^\infty}^\pm(E/K_\infty)^\vee$ has rank one. Combining this with Proposition~\ref{prop:PR}, and arguing as in \cite[\S 3.2]{Ber1}, one sees that
\[ \rank_{\Z_p}US_p^\pm(E/K)=\rank_\Lambda\hat S_p^\pm(E/K_\infty)=1. \]
Then it suffices to show that $US_p^\pm (E/K)$ contains a non-trivial element of $S_p^\pm(E/K)$ not divisible by $p$. This would imply that $US_p^\pm (E/K)\simeq \Zp$ and that the $\Zp$-module  $S^\pm_p(E/K)\big/US^\pm_p(E/K)$ is torsion-free. The last statement of the theorem would then follow from Corollary~\ref{cor:key}.

Thus let us set $T\HH_n^\pm:=\varprojlim_m \HH_n^\pm[p^m]$ for the $p$-adic Tate module of $\HH_n^\pm$; then
\[
T\HH_n^\pm\big/pT\HH_n^\pm=\HH_n^\pm[p].
\]
Lemma~\ref{lem:Un} says that the free $R_n$-module $U_n^\pm$ from Lemma~\ref{lem:free} lies inside $\HH_n^\pm[p]$. Let $\tilde U_n^\pm$ be a free $\Zp[G_n]$-submodule of $T\HH_n^\pm$ of rank one lifting $U_n^\pm$ modulo $p$, generated by an element $v_n$. Then $\cor_{K_n/K}(v_n)$ is not divisible by $p$, thanks to the freeness of $\tilde U_n^\pm$. Lemma~\ref{lem:noSha} tells us that
\[
\cor_{K_n/K}(v_n)\in T\HH_0^\pm=US_p^\pm(E/K).
\]
By compactness, we may find a subsequence $\bigl(\cor_{K_{n_i}/K}(v_{n_i})\bigr)_{i\ge 0}$ converging to an element of  $S_p^\pm(E/K)$ that lies in $US_p^\pm(E/K)$ and is not divisible by $p$, as required.
\end{proof}

\section{An application: variation of Iwasawa invariants} \label{sec:application-variation}

\subsection{$p$-congruent elliptic curves}

In this section, we illustrate one application of Theorem \ref{thm:main-no-finite-index}. Recall that two elliptic curves $E_1/\Q$ and $E_2/\Q$ are \emph{$p$-congruent} (or \emph{congruent modulo $p$}) if $E[p]\simeq E'[p]$ as $G_\Q$-modules. Accordingly, in $\S$\ref{sec:hypotheses}, we gave a name to the following hypothesis.
\begin{itemize} 

\item[\textbf{(Cong)}] Given two elliptic curves $E_1/\Q$ and $E_2/\Q$, both of which satisfy \textbf{(mod $p$)}, there is an isomorphism $E_1[p] \simeq E_2[p]$ of $G_\Q$-modules.  

\end{itemize} 

In \cite{gv}, Greenberg and Vatsal showed that the Iwasawa invariants (defined over the cyclotomic $\Z_p$-extension of $\Q$) of $p$-congruent, $p$-ordinary elliptic curves are related by an explicit formula, and they used this to prove many cases of the Iwasawa Main Conjecture. These results were later extended to Hida families by Emerton, Pollack and Weston (\cite{epw}), while results in the non-ordinary case were established for elliptic curves by B. D. Kim (\cite{kim09}) and for more general modular forms by the first two named authors (\cite{hat-lei1}).

An analogue for the anticyclotomic $\Z_p$-extension $K_\infty$ of an imaginary quadratic field $K$ of the results in \cite{epw} was obtained by Castella, C.-H. Kim and Longo (\cite{CasKimLong}) under the assumption that the tame level $N$ of the Hida family is divisible by an \emph{odd} number of primes that are inert in $K$; in this setting, results on the vanishing of the $\mu$-invariant were obtained by Pollack and Weston (\cite{PolWes}). When $N$ satisfies this divisibility hypothesis, the usual Selmer groups over $K_\infty$ are expected to be $\Lambda$-cotorsion, and the arguments regarding the variation of Iwasawa invariants closely mirror the cyclotomic case.

As we have seen in previous sections, when all the primes dividing $N$ split in $K$ the classical Selmer groups are not $\Lambda$-cotorsion, and extra care must be taken (this happens, more generally, when $N$ is divisible by an \emph{even} number of inert primes). Recently, analogues in this setting of the Greenberg--Vatsal result was proved for $p$-ordinary modular forms by the first two named authors (\cite{hat-lei2, MRL2}). In this final section of the paper, we will extend these results  to setting of $p$-supersingular elliptic curves.


\subsection{Definitions of Iwasawa invariants} \label{iwasawa-subsec}

Let us briefly recall the definition of the Iwasawa invariants. As explained in \S \ref{iwasawa-algebra-subsec}, we identify $\Lambda$ with $\Z_p[\![X]\!]$ via our fixed topological generator $\gamma_\infty$ of $G_\infty$. Let $M$ be a finitely generated $\Lambda$-module; there is a pseudo-isomorphism, i.e., a map with finite kernel and cokernel
\begin{equation}\label{eq:pseudo}
M\sim \Lambda^{\oplus r}\oplus \bigoplus_{i=1}^s \Lambda/(p^{a_i})\oplus \bigoplus_{j=1}^t\Lambda\big/\bigl(F_j^{n_j}\bigr)
\end{equation}
for suitable integers $r,s,t\ge 0$, $a_i,n_j\ge1$ and irreducible Weierstrass polynomials $F_j\in\Z_p[X]$ (see, e.g., \cite[Theorem 5.3.8]{NSW}). The \emph{$\mu$-invariant} and the \emph{$\lambda$-invariant} of $M$ are
\[ \mu(M):=\sum_{i=1}^s a_i,\qquad \lambda(M):=\sum_{j=1}^tn_j\deg(F_j). \]

Continue to assume that $M$ is a finitely-generated $\Lambda$-module $M$, and let $M_\tor$ denote the maximal torsion submodule of $M$. Recall from \cite[\S1.8 and \S3.1]{Jan} that there is an exact sequence of $\Lambda$-modules
\begin{equation}
   0\rightarrow M_\tor\rightarrow M\rightarrow M^{++}\rightarrow T_2(M)\rightarrow 0,
\label{eq:jan}
\end{equation}
where $M^{++}$ is the reflexive hull of $M$, which is free over $\Lambda$ and $T_2(M)$ is finite. In the same way as \cite[Definition 3.3]{MRL2}, we define an integer $c(M)$ as follows.

\begin{definition}\label{def:error-term}
  Let $M$ be a finitely generated module over $\Lambda$. We define  $ c(M)$ to be the unique integer satisfying the equation
  \[
 |T_2(M)[p]|=|\F_p|^{c(M)}.
  \]
\end{definition}

When $M=\Sel^\pm_{p^\infty}(E/K_\infty)^\vee$ is the Pontryagin dual of the plus/minus Selmer group associated to an elliptic curve $E$, we write $c_\pm(E)=c(M)$.

\subsection{Variation of Iwasawa invariants} \label{iwasawa-variation-subsec}

For the rest of this paper, we fix elliptic curves $E_1$ and $E_2$ with square-free conductors $N_1$ and $N_2$, respectively, such that $E_1[p] \simeq E_2[p]$ as $G_\Q$-modules and $a_p(E_i)=0$ for $i=1,2$. Moreover, we assume that, taken together, the field $K$, the prime $p$, and the product of the conductors $N_1N_2$  satisfy \textbf{(Heeg)}. We also assume that both curves satisfy the hypothesis  \textbf{(Tam)}.

Our goal is to relate the Iwasawa invariants of $\Sel^\pm(E_i/K_{\infty})$ for $i=1,2$ (or rather, the Pontryagin duals of these Selmer groups). To ease notation, let us write $\mathcal{X}^\pm(E_i)=\Sel^\pm_{p^\infty}(E_i/K_\infty)^\vee$ for the Pontryagin dual of each Selmer group, and let us set
\[
\mu_\pm(E_i):=\mu\bigl(\mathcal{X}^\pm(E_i) \bigr) \quad \text{and} \quad \lambda_\pm(E_i):=\lambda\bigl(\mathcal{X}^\pm(E_i)).
\]
Our first step is to observe that the $G_\Q$-isomorphism $E_1[p] \simeq E_2[p]$ induces an isomorphism of Selmer groups $$\Sel^\pm_p(E_1/K_\infty) \simeq \Sel^\pm_p(E_2/K_\infty).$$ This amounts to showing that we have isomorphisms between the local conditions defining the Selmer groups. For the places away from $p$, the local condition is the unramified condition, so this is clear, and it remains only to check the local condition at places above $p$. But we have assumed that $p$ splits in $K/\Q$, hence for a prime $\mathfrak{p} \mid p$ of $K$ above $p$, we have $K_{\mathfrak{p}} = \Q_p$, in which case this compatibility is well-established; see e.g. \cite[Proposition 2.8]{kim09} and the proof of Theorem 4.1.1 of \cite{AhmedLim}.

Thus, in light of Proposition \ref{prop:control}, we have an isomorphism 
\[
\Sel^\pm_{p^\infty}(E_1 / K_\infty)[p] \simeq \Sel^\pm_{p^\infty}(E_2 / K_\infty)[p].
\]
By duality, we have established the following result.

\begin{proposition}\label{prop:congruence-gives-isomorphic-selmer}
  Suppose $E_1/\Q$ and $E_2/\Q$ are elliptic curves satisfying $a_p(E_i)=0$ and  \textbf{(Tam)} for $i=1,2$ and that $E_1[p] \simeq E_2[p]$ as $G_\Q$-modules. Assume further that the hypothesis $\textbf{(Heeg)}$ holds. Then we have an isomorphism
  \[
\mathcal{X}^\pm(E_1) / p \simeq \mathcal{X}^\pm(E_2) / p.
  \]
\end{proposition}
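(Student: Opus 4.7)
The plan is to observe that the proof is essentially contained in the discussion preceding the statement, and to organize those observations into three clean steps: (i) establish a Galois-equivariant identification of the mod-$p$ Selmer groups, (ii) invoke the plus/minus control theorem (Proposition~\ref{prop:control}) to recognize these as the $p$-torsion of the $p^\infty$-Selmer groups, and (iii) dualize.

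First, I would produce an isomorphism $\Sel^\pm_p(E_1/K_\infty) \simeq \Sel^\pm_p(E_2/K_\infty)$. Both Selmer groups sit inside a global $H^1$, and the fixed $G_\Q$-isomorphism $E_1[p]\simeq E_2[p]$ induces a $G_{K_\infty}$-equivariant isomorphism $H^1_{S}(K_\infty,E_1[p])\simeq H^1_{S}(K_\infty,E_2[p])$ (the tame level is taken to be $N_1N_2$, and \textbf{(Heeg)} is satisfied for this composite conductor). It then suffices to match the local conditions at every place $v\in S$. For $v\nmid p$ the local condition is the (unramified) image of the Kummer map, which depends only on the $G_{K_v}$-module structure of $E_i[p]$, so the identification is immediate. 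For $v\mid p$, since $p$ splits in $K$ we have $K_v=\Q_p$, and the plus/minus local conditions $\HH^\pm_v[p]\subset H^1(K_{\infty,v},E_i[p])$ of Kobayashi type depend only on $E_i[p]$ as a $G_{\Q_p}$-module; this is exactly the compatibility proved in \cite[Proposition~2.8]{kim09} (see also the proof of \cite[Theorem~4.1.1]{AhmedLim}). Combining these local identifications with the global identification gives the desired isomorphism of mod-$p$ Selmer groups over $K_\infty$.

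Next, I would apply Proposition~\ref{prop:control} with $m=1$, $m'=\infty$ and $n=n'=\infty$ (hypothesis \textbf{(Tam)} is in force for both curves) to obtain, for $i=1,2$, an identification
\[
\Sel^\pm_p(E_i/K_\infty)\simeq \Sel^\pm_{p^\infty}(E_i/K_\infty)[p].
\]
Combining this with the previous step yields
\[
\Sel^\pm_{p^\infty}(E_1/K_\infty)[p]\simeq \Sel^\pm_{p^\infty}(E_2/K_\infty)[p].
\]
Finally, applying Pontryagin duality and using the general identity $(M[p])^\vee\simeq M^\vee/p$ for a discrete $p$-primary $\Lambda$-module $M$ (since the multiplication-by-$p$ map $M\xrightarrow{p}M$ dualizes to multiplication by $p$ on $M^\vee$), we conclude
\[
\mathcal{X}^\pm(E_1)/p\simeq \mathcal{X}^\pm(E_2)/p.
\]

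The only delicate point is the verification at places above $p$ that the plus/minus local conditions depend only on $E_i[p]$ as a $G_{\Q_p}$-module. This is not obvious from the definition of $\HH^\pm_{n,v}$ via formal-group trace relations, but it is exactly the content of the compatibility cited from \cite{kim09,AhmedLim}, which crucially uses $K_v=\Q_p$; everything else in the argument is formal and follows directly from results already established in the paper.
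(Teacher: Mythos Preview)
Your proposal is correct and follows essentially the same approach as the paper: the paper's proof is precisely the discussion immediately preceding the proposition, and your three steps (match local conditions to identify the mod-$p$ Selmer groups, invoke Proposition~\ref{prop:control} to pass to $\Sel^\pm_{p^\infty}(\,\cdot\,)[p]$, then dualize) mirror that discussion exactly, including the same citations \cite{kim09,AhmedLim} for the compatibility of the plus/minus local conditions at primes above $p$.
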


Recall that, when $E$ is supersingular, $\mu_\pm(E)$ is always expected to vanish, and it is known to vanish in some cases (see, e.g., \cite[Theorem B]{Matar19}). Furthermore, when these $\mu$-invariants vanish, the corresponding $\lambda$-invariants can often be related to each other in an explicit manner, in the spirit of Greenerg-Vatsal \cite{gv}. In our present setting, we have the following result.

\begin{theorem}\label{thm:main2} \label{thm:variation-of-iwasawa-invariants}
Let $E_1/\Q$ and $E_2/\Q$ be elliptic curves for which the hypotheses $\textbf{(Heeg)}$ and $\textbf{(Cong)}$ hold. Then
\[
\mu_\pm(E_1)=0\; \Longleftrightarrow\; \mu_\pm(E_2)=0.
\]
Suppose that $\mu_\pm(E_1)=\mu_\pm(E_2)=0$. Then
\[
\lambda_\pm(E_1) + c_\pm(E_1) = \lambda_\pm(E_2) + c_\pm(E_2).
\]
\end{theorem}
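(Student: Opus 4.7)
The plan is to deduce Theorem~\ref{thm:variation-of-iwasawa-invariants} from the $\Lambda/p$-module isomorphism $\mathcal{X}^\pm(E_1)/p\simeq \mathcal{X}^\pm(E_2)/p$ provided by Proposition~\ref{prop:congruence-gives-isomorphic-selmer}, by reading off the $\mu$- and $\lambda$-invariants from the mod-$p$ reduction. Abbreviate $M_i := \mathcal{X}^\pm(E_i)$ and $M_i'' := M_i/(M_i)_\tor$. First I would assemble the structural picture for each $M_i$: hypothesis \textbf{(Cong)} entails \textbf{(mod $p$)} for both curves, so Theorem~\ref{thm:main-no-finite-index} guarantees that $M_i$ has no nonzero finite $\Lambda$-submodule, while $M_i$ has $\Lambda$-rank one by \cite[Theorem~1.4]{LV-BUMI}. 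The exact sequence \eqref{eq:jan} therefore reads
\[
0\longrightarrow (M_i)_\tor\longrightarrow M_i\longrightarrow \Lambda\longrightarrow T_2(M_i)\longrightarrow 0,
\]
and $(M_i)_\tor$ also has no nonzero finite $\Lambda$-submodule.

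The $\mu$-invariant equivalence is then formal, via the following criterion: for a finitely generated $\Lambda$-module $N$, one has $\mu(N)=0$ if and only if the $\Lambda/p$-rank of $N/pN$ equals the $\Lambda$-rank of $N$. (This is a summand-by-summand check on the structure theorem \eqref{eq:pseudo}, noting that $\Lambda/p^a\otimes \Lambda/p$ contributes $\Lambda/p$ while $\Lambda/F^n\otimes \Lambda/p$ is finite.) Applied to $N=M_i$, the equivalence $\mu_\pm(E_1)=0\Leftrightarrow \mu_\pm(E_2)=0$ follows from $M_1/p\simeq M_2/p$, since both $M_i$ have $\Lambda$-rank one.

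Next, assume $\mu_\pm(E_1)=\mu_\pm(E_2)=0$. Then $(M_i)_\tor$ is torsion over $\Lambda$, has $\mu=0$, and carries no nonzero finite submodule, so it is free over $\Zp$ of rank $\lambda_\pm(E_i)$. Splitting the displayed four-term sequence into $0\to (M_i)_\tor \to M_i \to M_i'' \to 0$ and $0\to M_i'' \to \Lambda \to T_2(M_i)\to 0$, and reducing modulo $p$, the snake lemma (using that $M_i''\subset \Lambda$ is torsion-free, hence has trivial $p$-torsion) produces
\[
0\longrightarrow (M_i)_\tor/p\longrightarrow M_i/p\longrightarrow M_i''/p\longrightarrow 0
\]
together with
\[
0\longrightarrow T_2(M_i)[p]\longrightarrow M_i''/p\longrightarrow \Lambda/p\longrightarrow T_2(M_i)/p\longrightarrow 0.
\]
Since $\Lambda/p$ is a domain, the image of $M_i''/p$ in $\Lambda/p$ is torsion-free, so the $\Lambda/p$-torsion submodule of $M_i/p$ coincides with the preimage of $T_2(M_i)[p]\subset M_i''/p$. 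This preimage is a finite extension of $T_2(M_i)[p]$ by $(M_i)_\tor/p$, whose $\Fp$-dimensions are $c_\pm(E_i)$ and $\lambda_\pm(E_i)$, respectively. Comparing $\Fp$-dimensions of these torsion submodules under the iso of Proposition~\ref{prop:congruence-gives-isomorphic-selmer} then yields $\lambda_\pm(E_1)+c_\pm(E_1)=\lambda_\pm(E_2)+c_\pm(E_2)$.

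The main obstacle was Theorem~\ref{thm:main-no-finite-index} itself: without the absence of a nontrivial finite $\Lambda$-submodule in $M_i$ (and hence in $(M_i)_\tor$), one would have to carry additional error terms on the torsion side of $M_i/p$, muddling the identification of its $\Lambda/p$-torsion. Given that input, the remainder of the argument is formal bookkeeping with the structure theorem and the snake lemma, requiring no further arithmetic information.
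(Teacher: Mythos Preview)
Your proof is correct and follows the same overall architecture as the paper's: both start from the isomorphism $\mathcal{X}^\pm(E_1)/p\simeq\mathcal{X}^\pm(E_2)/p$ of Proposition~\ref{prop:congruence-gives-isomorphic-selmer}, invoke the rank-one statement from \cite[Theorem~1.4]{LV-BUMI}, and use Theorem~\ref{thm:main-no-finite-index} to kill finite submodules before reading off the invariants. The only difference is one of packaging: the paper outsources the two purely algebraic deductions to \cite[Corollary~2.4]{hat-lei2} (for the $\mu$-equivalence) and \cite[Corollary~3.4]{MRL2} (for the $\lambda+c$ formula), whereas you reprove those lemmas in place via the $\Lambda/p$-rank criterion and the snake-lemma analysis of \eqref{eq:jan}. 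Your argument is thus more self-contained, at the cost of a little extra length; the paper's version is terser but relies on the reader tracking down the cited corollaries, which in fact contain exactly the computations you wrote out.
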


\begin{proof}
Our assumptions that $E_1[p] \simeq E_2[p]$ and that $\textbf{(Heeg)}$ holds allow us to invoke Proposition \ref{prop:congruence-gives-isomorphic-selmer} to deduce an isomorphism
\[
\mathcal{X}^\pm(E_1) / p \simeq \mathcal{X}^\pm(E_2) / p.
\]
By \cite[Theorem~1.4]{LV-BUMI}, both $\mathcal{X}_\pm(E_1)$ and $\mathcal{X}_\pm(E_2)$ have $\Lambda$-rank $1$, so we may apply \cite[Corollary 2.4]{hat-lei2} to conclude that
\[
\mu_\pm(E_1) =0 \;\Longleftrightarrow\; \mu_\pm(E_2)=0.
\]
Assume that the $\mu_\pm$-invariants vanish. Then since \textbf{(Cong)} holds, we know from Theorem \ref{thm:main-no-finite-index} (and duality) that each $\mathcal{X}^\pm(E_i)$ contains no non-trivial finite $\Lambda$-submodules, so we may apply \cite[Corollary 3.4]{MRL2} to deduce the claimed relationship between the $\lambda_\pm$-invariants, and we are finished.
\end{proof}
\bibliographystyle{amsplain}
\bibliography{references}
\end{document}